\documentclass{amsart}
\usepackage[numbers]{natbib}
\usepackage{geometry}
\usepackage{amssymb}
\usepackage{amsthm}
\usepackage{mathtools}
\usepackage{diagbox}
\usepackage{slashbox}
\usepackage{tikz}
\usepackage{graphicx}
\usepackage{algorithm}
\usepackage{algpseudocode}
\usepackage{multirow}
\usepackage{bm}
\usepackage{url}
\usepackage{doi}
\usepackage{hyperref}
\usepackage{cleveref}

\newcommand{\credal}{{\mathcal{P}}}
\newcommand{\expe}{{\mathbb{E}}}
\newcommand{\lexpe}{{\underline{\expe}}}
\newcommand{\uexpe}{{\overline{\expe}}}

\newcommand*\circled[1]{\tikz[baseline=(char.base)]{
\node[shape=circle,draw,inner sep=1pt] (char) {#1};}}
\newcommand*\myboxed[1]{\tikz[baseline=(char.base)]{
\node[shape=rectangle,draw,inner sep=2.5pt] (char) {#1};}}

\newtheorem{definition}{Definition}
\newtheorem{lemma}{Lemma}
\newtheorem{theorem}{Theorem}
\newtheorem{corollary}{Corollary}
\newtheorem{example}{Example}

\title{Regret-based budgeted decision rules under severe uncertainty}
\author{Nawapon Nakharutai}
\address{Department of Statistics, Faculty of Science, Chiang Mai University}
\author{S\'ebastien Destercke}
\address{UMR CNRS 7253 Heudiasyc, Universite de Technologie de Compiegne, Sorbonne Universite}
\author{Matthias C. M. Troffaes}
\address{Department of Mathematical Sciences, Durham University}

\begin{document}

\begin{abstract}
One way to make decisions under uncertainty is to select an optimal option from a possible range of options, by maximizing the expected utilities derived from a probability model. However, under severe uncertainty, identifying precise probabilities is hard. For this reason, imprecise probability models uncertainty through convex sets of probabilities, and considers decision rules that can return multiple options to reflect insufficient information. Many well-founded decision rules have been studied in the past, but none of those standard rules are able to control the number of returned alternatives. This can be a problem for large decision problems, due to the cognitive burden decision makers have to face when presented with a large number of alternatives. Our contribution proposes regret-based ideas to construct new decision rules which return a bounded number of options, where the limit on the number of options is set in advance by the decision maker as an expression of their cognitive limitation. We also study their consistency and numerical behaviour.
\end{abstract}

\keywords{decision; regret; minimax; maximin; imprecise probability; optimization; numerical algorithm}

\maketitle


\section{Introduction}\label{sec:intro}
Consider a decision problem where a decision maker wants to choose the best option from a set of  available options, but where there are some uncertain variables that also influence the value of the decision. Such problems are routinely met in machine learning, where a classifier will typically return an uncertainty model over the possible classes (see \cite[Ch.~10]{augustin2014introduction} and \cite{quost2018classification}); in structure design, where the quality of each design depends on multiple uncertain factors~\cite{limbourg2007fault,2005:Aughenbaugh:Paredis,bains2020economic,Schumann2011}; in agronomy where the weather and soil qualities are uncertain by nature~\cite{paton2015robust}; etc. A classical way to model such a situation is to associate a reward with each option assumed to be expressed in numerical utility scale and that depends on the uncertain state of nature. In this way, we can view an uncertain reward, and thereby, each decision, as a bounded real-valued function on the set of states of nature. Such function is commonly called an act \cite{1963:Anscombe:Aumann} or a gamble \cite{Walley91} in decision theory.

If uncertainty is described by precise probabilities for all events, a classical approach is to select the act that yields the highest expected utility \cite{1963:Anscombe:Aumann}. However, such precise probabilities may not always be obtainable in a reliable way, particularly when we only have little information about the states of nature and their chance to happen. 
One way to handle this situation is to relax the need for a unique, precise probability and to consider imprecise probability theory, in which uncertainty is modelled by sets of probabilities~\cite{augustin2014introduction}. This theory includes as special cases many commonly used uncertainty models, such as precise probabilities, possibility distributions, and belief functions. In this paper, we assume that our  knowledge about the true state of nature is represented by a closed convex set of probability mass functions.

Using sets of probabilities then requires to generalize the classical expected utility criterion. This is usually done either by proposing decision rules that return a single\footnote{Or multiple indifferent options.} option as optimal such as $\Gamma$-maximin, $\Gamma$-maximax or Hurwicz, or decision rules that can return multiple incomparable options as optimal, such as interval dominance, maximality and E-admissibility \cite{2007:Troffaes}. Nevertheless, these criteria can return any subset of options, from a single optimal act to the whole set of acts \cite{2020:nak}. This may be problematic, as it is conceivable that even if decision makers are keen to consider multiple options, they still may wish to limit the number of returned options, for example because of cognitive or monetary constraints. Such requirements are natural in many settings, such as set-valued classification~\cite{lorieul2021classification,denis2017confidence} or recommendation problems~\cite{viappiani2020equivalence}.

However, such \emph{budgeted} decision rule that limits the number of returned decisions still remains to be defined and explored within the imprecise probabilistic setting. This paper aims to propose and study such rules, using a regret criterion and expressing the rule either as a minimax or maximin problem. More precisely, we define the value of a given subset (of limited size) as the regret one would feel if an adversary was to pick an alternative from outside the retained subset. Note that we already studied the minimax version of this rule in a previous conference paper~\cite{k:budget2022}. This paper provides extended proofs, examples and numerical simulation of this minimax criterion, and introduces the maximin version, which was not studied before. 

The paper is organised as follows. In \cref{sec:preliminaries}, we present necessary notations and basic concepts for regret-based budgeted decision rules. We devote \cref{sec:minimax} to review regret-based budgeted decision rules by recalling the minimax criterion from our previous study. In \cref{sec:maximin}, we introduce the new maximin criterion and study their properties and consistency with classical imprecise probability decision criteria and provide an algorithm for the maximin criterion. In \cref{sec:computation}, we perform some computational experiments of these two regret-based budgeted decision rules. \Cref{sec:illustrations} provides two illustrative applications of the proposed rules. Finally, \cref{sec:conclusion} concludes the paper.

\section{Preliminaries and definitions}\label{sec:preliminaries}
We denote by $\Omega$ a finite set of possible states of nature about which we are uncertain.  We will consider decision problems where a subject can choose a finite number of acts from a set of acts $A$. For each act $a \in A$, if $\omega\in\Omega$  turns out the be the true state of nature, then $a(\omega)$ will represent the reward obtained by the subject (an end-user, a decision maker, \dots). We assume that the subject can specify a utility for each reward. Therefore, an act $a$ can be viewed as a real-valued function on $\Omega$ representing an uncertain reward expressed on a utility scale. 

In order to select an act, the subject can take their beliefs about the true state of nature into account. If their beliefs can be expressed through a probability mass function $p$, then they can simply select an act that maximizes their expectation $\expe_p(a) \coloneqq \sum_{\omega \in \Omega} p(\omega)a(\omega)$. However, when information is lacking, they might only be able to represent their knowledge about the unknown true value $\omega$ via a closed convex set of probability mass functions on $\Omega$.  Such a set $\credal$  of probability mass functions is called a credal set \cite{levi1980a}.

Let $\mathcal{A}$ denote the set of all finite non-empty sets of acts. Mathematically, we can then define:
\begin{definition}
A \emph{decision rule} is a function $D\colon\mathcal{A} \to \mathcal{A}$ such that $D(A)\subseteq A$ for every $A\in\mathcal{A}$.
\end{definition}
In the case of a precise probability mass function, the classical decision rule is $D(A)=\arg\max_{a \in A} \expe_p(a)$.
Given a credal set $\credal$, there are several decision rules that extend expected utility. Some can return a single act, e.g. $\Gamma$-maximin (or $\Gamma$-maximax) which maximizes the worst case (or minimal) expected utility of acts in $A$, and others a set of multiple acts, e.g. interval dominance and maximality \cite{2007:Troffaes}. In this study, we will compare our budgeted decision rules to maximality, which is induced by a partial ordering as follows: for any acts $a$ and $a'$, we say that an act $a$ dominates $a'$, denoted $a \succ a'$, whenever $\lexpe(a - a') > 0$, where $\lexpe(f)\coloneqq \min_{p \in \credal} \expe_p(f)$ is the lower bound of the expectation of a function $f$ taken over all probabilities of $\credal$; observe that $a-a'$ is the point-wise difference between two acts, and is therefore a real-valued function over $\Omega$. Note that $\lexpe(a - a') > 0$ is equivalent to requiring that $\expe_p(a) > \expe_p(a')$ for all $p \in \credal$, and is therefore equivalent to a robust version of the classical expectation-based decision rule where $a$ dominates $a'$ if $\expe_p(a) > \expe_p(a')$. 

The set of maximal acts in $A$ with respect to $\succ$ is then defined by:
\begin{equation}
D_M(A)\coloneqq\{a \in A \colon \nexists a' \in A \text{ s.t. }  a' \succ a\}
\end{equation}
which contains all undominated acts in $A$ with respect to $\succ$. In other words, $a$ is a maximal element in $A$ if and only if 
\begin{equation}
\min_{a'\in A} \uexpe(a - a') \geq 0,
\end{equation}
where $\uexpe(f)\coloneqq \max_{p \in \credal} \expe_p(f)$ is defined as the upper expectation operator, which is dual to the lower expectation since $\lexpe(f)=-\uexpe(-f)$. Note that maximality can return any non-empty subset of $A$, from a single act up to the whole set $A$~\cite{2020:nak}. If most or all elements of $A$ are returned by maximality, this may not be helpful to a decision maker, especially when $A$ is very large or when checking alternatives is costly. 

To address this issue, we introduce a so-called
budgeted decision rule.
\begin{definition}
A decision rule $D$ is said to be \emph{$k$-budgeted}
if $|D(A)|\le k$ for all $A\in\mathcal{A}$.
\end{definition}
In the above, $|D(A)|$ denotes the \emph{cardinality} of $D(A)$ (i.e. the number of elements of $D(A)$). The parameter $k$ is therefore an upper bound on the number of returned alternatives. In practice, its value can be settled accordingly to the application constraints, i.e., the number of alternatives that can be realistically inspected, for instance by a decision maker or by a second automatic processing step. It may also depends on the cardinality $|\mathcal{A}|$, e.g., passing on at most $10\%$ of $\mathcal{A}$ to the next processing step.

Next, we consider whether a $k$-budgeted decision rule preserves maximality. Specifically, we define consistency properties with respect to maximality as follows:

\begin{definition}
A decision rule $D$ is said to be \emph{strongly consistent} (with $D_M$) if for all $A\in\mathcal{A}$, we have
\begin{equation}
D(A) \subseteq D_M(A)
\end{equation}
\end{definition}

\begin{definition}
A decision rule $D$ is said to be \emph{weakly consistent} (with $D_M$) if for all $A\in\mathcal{A}$
\begin{equation}
D(A) \cap D_M(A) \neq \emptyset
\end{equation}
\end{definition}

Strong consistency ensures that a rule selects maximal acts only. Weak consistency ensures that 
it selects at least one maximal act, though it may also select non-maximal acts.
Clearly, strong consistency implies weak consistency. One can easily adapt those definitions to other decision rules possibly returning multiple elements, but we will focus here on maximality.

\begin{example}\label{ex:Intro_IP}
Consider the state space $\Omega=\{\omega_1,\omega_2,\omega_3\}$ and the acts of \cref{tab:acts_reg_mML}. Suppose furthermore that the credal set is specified as 
\begin{equation}\label{eq:credal}
\credal = \{p \in \mathbb{P} \colon p(\omega_3) \leq p(\omega_1), \ p(\omega_3)\leq 0.3\}
\end{equation}
where $\mathbb{P}$ denotes the set of all probability mass functions on $\Omega$.
The corresponding credal set is displayed in \cref{fig:simplex}.

\begin{figure}
	\begin{center}
	\begin{tikzpicture}
	\coordinate (x1) at (90:3cm);
	\coordinate (x3) at (210:3cm);
	\coordinate (x2) at (-30:3cm);
	\node[above] at (x1) {$p(\omega_1)$}  ;
	\node[right] at (x2) {$p(\omega_2)$};
	\node[left] at (x3) {$p(\omega_3)$};
	
	\draw [thick] (x1.south) -- (x2.north east) -- (x3.north west) -- (x1.south);
    \draw[dotted,very thick] (barycentric cs:x1=-0.15,x2=0.85 ,x3=0.3) -- (barycentric cs:x1=0.85,x2=-0.15 ,x3=0.3) node[left] {\small $p(\omega_3)= 0.3$}  ;
        \draw[dotted, very thick] (barycentric cs:x1=0.6,x2=-0.2,x3=0.6) node[left] {\small $p(\omega_3) = p(\omega_1)$} -- (barycentric cs:x1=-0.1,x2=1.2 ,x3=-0.1)   ;
        \path[fill=green,opacity=0.5] (barycentric cs:x1=1,x2=0 ,x3=0) -- (barycentric cs:x1=0,x2=1 ,x3=0) -- (barycentric cs:x1=0.3,x2=0.4 ,x3=0.3) -- (barycentric cs:x1=0.7,x2=0 ,x3=0.3) -- cycle;
	\end{tikzpicture}
	\end{center}
	\caption{Credal set of \cref{ex:Intro_IP} in barycentric coordinates (in green)}
			\label{fig:simplex}
\end{figure}
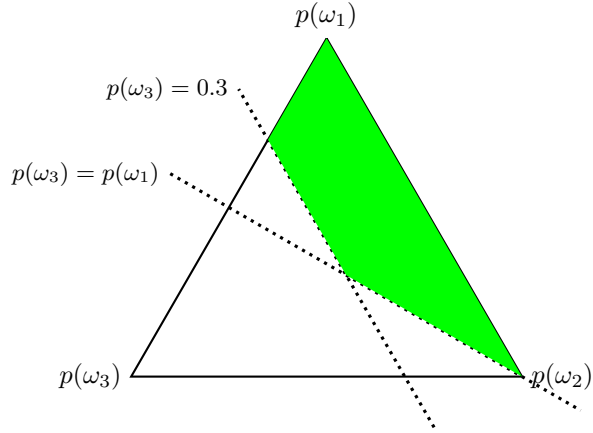

\begin{table}
\begin{minipage}{0.36\textwidth}
	\begin{center}
\begin{tabular}{c|ccc}
 & $\omega_1$ & $\omega_2$ & $\omega_3$ \\ \hline
$a_1$ & 6 & 3 & 1 \\
$a_2$ & 2 & 7 & 4 \\
$a_3$ & 5 & 1 & 8 \\
$a_4$ & 5 & 4 & 3 \\
$a_5$ & 1 & 2 & 6 \\ \hline
\end{tabular}
\vspace{0.5em}
\caption{Acts of \cref{ex:Intro_IP}}\label{tab:acts_reg_mML}
	\end{center}
 \end{minipage}
	\hfill
 \begin{minipage}{0.63\textwidth}
 \begin{center}
\begin{tabular}{|c|*{5}{c}|}\hline
\backslashbox{$j$}{$i$}
&\makebox[2em]{1}&\makebox[2em]{2}&\makebox[2em]{3}
&\makebox[2em]{4}&\makebox[2em]{5}\\\hline
	1 & - & 4.0 & 2.0 & 1.0 & 5.0 \\ 
	2 & 4.0 & - & 6.0 & 3.0 & 5.0 \\
	3 & 1.4 & 3.3 & - & 1.5 & 4.0 \\
	4 & 1.0 & 3.0 & 3.0 & - & 4.0 \\
	5 & $-0.4$ & $-0.1$ & 1.0 &$-1.1$  & - \\ \hline
\end{tabular}
\vspace{0.5em}
\caption{Values of $\uexpe(a_j - a_i)$}\label{table:example:mML}
\end{center}
\end{minipage}
\end{table}

\Cref{table:example:mML} gives the values of $\uexpe(a_j - a_i)=-\lexpe(a_i-a_j)$ where $j\neq i$. The set of maximal elements of $A$ is $D_M(A)=\{a_1,a_2,a_3,a_4\}$, as only row 5 of \cref{table:example:mML} contains negative values. For instance, we have that $a_5-a_1=(-5,-1,5)$, and the upper expected value $\uexpe(a_5 - a_1)=-0.4$ is obtained by considering the distribution $p(\omega_1)=0.3,p(\omega_2)=0.4,p(\omega_3)=0.3$ (the extreme point of the credal set that is the intersection of the dotted lines in \cref{fig:simplex}), which gives $\uexpe(a_5 - a_1)=-5 \times 0.3 + (-1) \times 0.4 + 5 \times 0.3=-0.4$. 
\end{example}

In the next section, we will construct $k$-budgeted decision rules based on the idea of regret. We introduce first the notion of regret we will consider in our decision rules, before proposing and studying its minimax and maximin versions to obtain recommendations with a limited budget. 

\section{Minimax criterion}\label{sec:minimax}
\subsection{Regret}

Let $a$ and $a'$ be acts in $A$. Then $\uexpe(a' - a)=\max_{p \in \credal} \expe_p(a' - a)= -\lexpe(a-a')$ is the maximal expected gain if we exchange $a'$ for $a$, in other words, the worst possible expected loss, or regret, to us if we keep $a$ instead of replacing $a$ for $a'$. If $a$ has a better expected utility than $a'$ under all $p\in\mathcal{P}$, then this value will be negative. 
Consider an act $a$ and any set of acts $S'$ which does not contain $a$.
The maximal loss, or worst reget, of not replacing $a$ for any act $a'\in S'$ is therefore
\begin{equation}\label{eq:single_regret_loss2}
ML(a,S')\coloneqq\max_{a' \in S'} \uexpe(a' - a).
\end{equation}
where we take the maximum to be $-\infty$ if $S'=\emptyset$.
If we had to pick a single alternative from $A$, a natural choice would be to minimize our worst regret given by \cref{eq:single_regret_loss2} for $S'=A\setminus\{a\}$.

\subsection{Review minimax criterion}

In this section, we recall the main results from \cite{k:budget2022}, with some additional elements and illustrations here for the sake of completeness, and to enable comparison to the maximin approach that will be introduced later.

We now consider a set-valued criterion based on \cref{eq:single_regret_loss2}. Consider any set $S$ of acts such that $\emptyset\neq S \subseteq A$. In this first minimax scenario, we first have to choose an element $a$ within the set $S$, and for each $a \in S$, the opponent can then pick $a'\in S'\coloneqq A\setminus S$ yielding the highest gain to them (the maximal loss to us), 
$\max_{a'\in S'}\uexpe(a'-a)$. The minimax regret of choosing a given set $S$ is then~\cite[eq.~(3)]{k:budget2022}:
\begin{equation}\label{eq:k-regret-loss}
mML(S,A)
\coloneqq
\min_{a \in S}ML(a, S')
=\min_{a \in S}\max_{a'\in S'}\uexpe(a'-a).
\end{equation}
Note that $mML(A,A)=-\infty$, because for $S=A$ we have $S'=\emptyset$. 

Next, we recall some basic properties of $mML(S,A)$.
First, \cref{eq:k-regret-loss} is monotone with respect to set inclusion (the bigger the set we can select from, the lower our loss)~\cite[Lemma 1]{k:budget2022}:
\begin{lemma}\label{lem:monotone}
For any $\emptyset\neq S\subseteq T \subseteq A\in\mathcal{A}$, we have that $mML(S,A) \geq mML(T,A)$.
\end{lemma}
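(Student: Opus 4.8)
The plan is to prove the inequality $mML(S,A) \geq mML(T,A)$ directly from the definition in \cref{eq:k-regret-loss}, exploiting the monotonicity structure hidden in the nested minima and maxima. Writing $S' \coloneqq A\setminus S$ and $T' \coloneqq A\setminus T$, the hypothesis $S\subseteq T$ gives us two simultaneous facts: first, the inner maximum for $mML(T,A)$ is taken over the \emph{smaller} index set $T' \subseteq S'$, and second, the outer minimum for $mML(T,A)$ is taken over the \emph{larger} index set $T \supseteq S$. Both of these push $mML(T,A)$ downward relative to $mML(S,A)$, so the claimed inequality should follow by chaining the two effects.

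Concretely, first I would handle the easy degenerate case $T=A$ (equivalently $T'=\emptyset$): then $mML(T,A)=-\infty$ by the remark following \cref{eq:k-regret-loss}, and the inequality holds trivially since $mML(S,A)\in\mathbb{R}\cup\{-\infty\}$. So assume $\emptyset \neq T' \subseteq S'$. For the main argument, fix any $a \in S$. Since $T' \subseteq S'$, taking a maximum over the larger set only makes it bigger (or leaves it at $-\infty$ if $S'=\emptyset$, but that case is covered by $T=A$), so
\begin{equation}
ML(a,S') = \max_{a'\in S'}\uexpe(a'-a) \geq \max_{a'\in T'}\uexpe(a'-a) = ML(a,T').
\end{equation}
This holds for every $a \in S$. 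Now, because $S \subseteq T$, the minimum of $ML(a,T')$ over $a \in T$ is no larger than its minimum over the smaller set $a\in S$; combined with the pointwise inequality just established,
\begin{equation}
mML(S,A) = \min_{a\in S} ML(a,S') \geq \min_{a\in S} ML(a,T') \geq \min_{a\in T} ML(a,T') = mML(T,A),
\end{equation}
which is exactly the claim.

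The argument is essentially bookkeeping about how $\min$ and $\max$ respond to enlarging or shrinking their domains, so there is no real obstacle; the only thing to be careful about is the handling of $-\infty$ values (empty index sets), which is why I would isolate the case $T=A$ at the start and note that when $S\subsetneq T$ we automatically have $T'$ nonempty, so $\max_{a'\in T'}$ is a genuine maximum over a nonempty finite set. One could alternatively phrase this even more compactly by observing that $mML(\cdot,A)$ is obtained by first restricting the opponent's set and then widening the decision maker's set, each of which is visibly monotone, but the two-step displayed chain above is the cleanest rigorous route.
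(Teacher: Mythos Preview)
Your proof is correct and follows essentially the same approach as the paper: both arguments chain the two monotonicity facts that enlarging the outer $\min$ domain (from $S$ to $T$) and shrinking the inner $\max$ domain (from $S'$ to $T'$) each decrease the value. The only cosmetic differences are that you apply the two steps in the opposite order and you explicitly isolate the degenerate case $T=A$, which the paper leaves implicit.
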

\begin{proof}
Recall $S'\coloneqq A\setminus S$ and $T'\coloneqq A\setminus T$.
Indeed, 
\begin{align}
mML(T,A) & = \min_{a \in T} ML(a, T')   \\
& \leq \min_{a \in S} ML(a, T')   &&(\text{since } S \subseteq T) \\
& = \min_{a \in S} \max_{a' \in T'} \uexpe(a' - a) \\
& \leq \min_{a \in S} \max_{a' \in S'} \uexpe(a' - a)&& (\text{since } T' \subseteq S')\\
& = mML(S,A)
\end{align}
\end{proof}
Next, if $mML(S,A)$ is negative, then $S$ is a superset of the set of maximal elements~\cite[Theorem 1]{k:budget2022}:
\begin{theorem}\label{thm:1} For $\emptyset\neq S \subseteq A$, 
$mML(S,A)<0$
if and only if
there is an $a\in S$ such that for all $a'\in S'\coloneqq A\setminus S$ we have that $a\succ a'$.
\end{theorem}
\begin{proof}
By the definition, we have
\begin{align}
mML(S,A) < 0
&  \iff  \min_{a \in S}\max_{a' \in S'}\uexpe(a' - a) < 0\\
& \iff \exists a \in S,\ \max_{a' \in S'} \uexpe(a' - a) < 0 \\
& \iff \exists a \in S,\ \forall a' \in S',\ \uexpe(a' - a) < 0 \\
& \iff \exists a \in S,\ \forall a' \in S',\ \lexpe(a - a') > 0
\end{align}
\end{proof}

\begin{corollary}\label{cor:weak-consistency-negative}
If $mML(S,A) < 0$ then $D_M(A) \subseteq S$.
\end{corollary}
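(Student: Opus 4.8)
The plan is to derive the corollary directly from \cref{thm:1}. Assuming $mML(S,A)<0$, \cref{thm:1} gives us an act $a\in S$ such that $a\succ a'$ for every $a'\in S'=A\setminus S$. I would then argue by contradiction: suppose $D_M(A)\not\subseteq S$, so there is some $b\in D_M(A)$ with $b\notin S$, i.e.\ $b\in S'$. Applying the conclusion of \cref{thm:1} to this particular $b$ yields $a\succ b$.

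But $b\in D_M(A)$ means, by the definition of the set of maximal acts, that there is no act in $A$ that dominates $b$ with respect to $\succ$. Since $a\in S\subseteq A$, the relation $a\succ b$ contradicts the maximality of $b$. Hence no such $b$ exists, and $D_M(A)\subseteq S$.

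There is essentially no obstacle here: the statement is an immediate packaging of \cref{thm:1} together with the definition of $D_M$. The only point worth stating carefully is that the witness $a$ supplied by \cref{thm:1} lies in $S\subseteq A$, so it is a legitimate candidate dominator when we invoke the defining property of $D_M(A)$; once that is noted, the contradiction is automatic.
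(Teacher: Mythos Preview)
Your argument is correct and essentially identical to the paper's own proof: both invoke \cref{thm:1} to obtain an $a\in S$ dominating every $a'\in S'$, and then use the definition of $D_M(A)$ to conclude that no element of $S'$ is maximal. The paper phrases this directly (``no $a'\in S'$ can be maximal'') while you phrase it as a contradiction, but the content is the same.
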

\begin{proof}
By \cref{thm:1}, if $mML(S,A) < 0$ then all $a'\in S'$ are dominated by some $a\in S$, and therefore, no $a'\in S'$ can be maximal. Consequently, it must be that $S$ contains all maximal elements.
\end{proof}

Given the fact that a decision maker wants to choose $S$ so as to minimize its regret, an optimal subset $S \subseteq A$ with respect to the $mML$ criterion can be defined as
\begin{equation}\label{eq:S_optim_minmax}
S^*_k(A)\coloneqq\arg\min_{\substack{\emptyset\neq S\subseteq A\\ |S|\le k}} mML(S, A).
\end{equation}
As the $\arg\min$ may not be unique, but may contain sets of alternatives that are indifferent from a regret viewpoint, we assume one set is picked at random if need be. The minimum value is denoted by
\begin{equation}\label{eq:S_optim_minimax_value}
mML_k^*(A)\coloneqq mML(S^*_k(A), A).
\end{equation}
By \cref{lem:monotone}, $S_k^*(A)$ will always be either $A$ if $|A|\le k$, or a set containing exactly $k$ elements if $|A|>k$,
and by \cref{cor:weak-consistency-negative},
$D_M(A)\subseteq S_k^*(A)$ if $mML^*_k(A) < 0$.
In particular, we know that $D_M(A)\subseteq S_k^*(A)$ if $mML^*_k(A) < 0$ or $|A|\le k$.
Therefore, it makes sense to define an $mML$ budgeted decision rule as follows:
\begin{equation}\label{eq:mML}
D^k_{mML}(A)\coloneqq\begin{cases} D_M(A) &  \text{ if } mML^*_k(A) < 0  \text{ or } |A|\le k\\ S^*_k(A) & \text{ otherwise}\end{cases}
\end{equation}

In \cite{k:budget2022}, we provided an example to show that $S^*_k(A)$ is not unique, and that we can have $S^*_k(A) \not\subseteq S^*_{k+1}(A)$. The latter fact shows in particular that the solution of \cref{eq:S_optim_minmax} cannot always be obtained incrementally by gradually increasing $k$, i.e., that a greedy approach will generally be sub-optimal.

Fortunately, there is an efficient algorithm \cite[Algorithm 1]{k:budget2022} to obtain $S^*_k(A)$ and $mML^*_k(A)$ without evaluating all possible sets $S$ of size $k$, of which there are $\binom{|A|}{k}$. This algorithm is represented in \cref{alg:S^*_k}. Note that this algorithm is polynomial. Loop 2-4 requires computing a quadratic number of upper expectations (which can be done by linear programming), and loop 6-10 is a sorting procedure.

\begin{algorithm}
\caption{Finding $S^*_k(A)$}\label{alg:S^*_k}
\begin{algorithmic}[1]
\Require $A = \{a_1,a_2,\dots, a_n\}$, $\credal$, $k$
\Ensure $S^*_k(A)$, $mML^*_k(A)$
\For{$i = 1\colon n$}
\For{$j = 1\colon n$, $j\neq i$}
\State compute $e_{ij}\coloneqq\uexpe(a_j - a_i)$
\EndFor
\EndFor
\For{$i = 1\colon n$}
\State $S[i] \gets$ set such that $\{e_{ij}\colon j\in S[i]\}$ are the $k$ largest elements of $\{e_{ij}\colon j\neq i\}$
\State $M[i]\gets \min_{j\in S[i]}e_{ij}$
\State $J[i]\gets \arg\min_{j\in S[i]}e_{ij}$
\EndFor
\State $i^*\gets \arg\min_{i=1}^n M[i]$
\State \Return $\{a_j\colon j\in \{i^*\}\cup S[i^*]\setminus\{J[i^*]\}\}$, $M[i^*]$
\end{algorithmic}
\end{algorithm}

The next example continues \cref{ex:Intro_IP}, illustrating the new decision rule and the algorithm to obtain $S_k^*(A)$~\cite[Example 1]{k:budget2022}. It also illustrates a case where $S^*_k(A) \not\subseteq S^*_{k+1}(A)$. 

\begin{example}\label{ex:mML}
Consider the credal set given by \cref{ex:Intro_IP}, then the optimal sets given by \cref{eq:S_optim_minmax} and obtainable through \cref{alg:S^*_k} are
\begin{itemize}
    \item $S^*_1(A) = \{a_4\}$ with $mML^*_1(A) = 3$, 
    \item $S^*_2(A) = \{a_1,a_2\}$ with $mML^*_2(A) = 1.4$, 
    \item $S^*_3(A) = \{a_1,a_2,a_3\}$ or $\{a_2,a_3,a_4\}$ with $mML^*_3(A) = 1$ and 
    \item $S^*_4(A) = \{a_1,a_2,a_3,a_4\}$ with $mML^*_4(A) = -1.1$.
    \end{itemize}
Note that $S^*_1(A) \not\subseteq S^*_2(A)$. Let us detail how \cref{alg:S^*_k} works to get $S^*_2(A)$. Line 7 gives $S[1]=\{2,3\}$ as the two largest elements of column 1 in \cref{table:example:mML} are in 4.0 (row $2$) and 1.4 (row $3$). For the other columns, we get $S[2]=\{1,3\}$, $S[3]=\{2,4\}$, $S[4]=\{2,3\}$, and $S[5]=\{1,2\}$. We then get $M[1]=1.4$, $M[2]=3.3$, $M[3]=3$, $M[4]=1.5$, and $M[5]=5$ (of which the minimum is $1.4$ at $i^*=1$), as well as $J[1]=3$, $J[2]=3$, $J[3]=4$, $J[4]=3$, and $J[5]=1$. From this, we obtain $S^*_2(A)=\{1\} \cup \{\{2,3\} \setminus \{3\}\}$ and the corresponding mML value $1.4$. In the case of $S^*_3(A)$, the rule would return one of the two sets $\{a_1,a_2,a_3\}$ or $\{a_2,a_3,a_4\}$, picking one at random.
\end{example}

We previously gave some properties of the $mML(S,A)$ function. Next are some properties of $S^*_k(A)$~\cite[Theorem 2]{k:budget2022}:
\begin{theorem}\label{lem:weak_consist_reg_mML}
For all $k\ge 1$,
\begin{equation}
    S^*_k(A)\cap D_M(A)\neq\emptyset.
\end{equation}
\end{theorem}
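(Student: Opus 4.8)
The plan is to show that the optimal set $S^*_k(A)$ always contains at least one maximal act, by arguing that the act realizing the outer minimum in $mML(S^*_k(A),A)$ can be taken to be a maximal element, or else we can swap in a maximal element without increasing the regret. First I would handle the easy case: if $|A|\le k$, then $S^*_k(A)=A$ by \cref{lem:monotone} (or directly, since $mML(A,A)=-\infty$), and $D_M(A)\subseteq A = S^*_k(A)$ is non-empty because $D_M(A)$ is always non-empty on a finite set of acts (every finite strict partial order has a maximal element). So the conclusion is immediate there.

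For the main case $|A|>k$, let $S\coloneqq S^*_k(A)$, so $|S|=k$, and let $a\in S$ be an act attaining $mML(S,A)=ML(a,S')=\max_{a'\in S'}\uexpe(a'-a)$. If $a\in D_M(A)$ we are done. Otherwise $a$ is dominated, so there is some $b\in A$ with $b\succ a$, i.e. $\uexpe(a-b)<0$. The key step is to pick $b$ to be maximal: since $\succ$ is a strict partial order on the finite set $A$, the set of acts dominating $a$ has a maximal element, and any maximal element of that set is in fact a maximal element of $A$ (if $c\succ b$ and $b\succ a$ then $c\succ a$ by transitivity of $\succ$, which follows from superadditivity of $\lexpe$, contradicting maximality of $b$ within the dominators of $a$). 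So we may assume $b\in D_M(A)$.

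Now I would compare $S$ with the swapped set $T\coloneqq (S\setminus\{a\})\cup\{b\}$. There are two sub-cases. If $b\in S$ already, then $b\in S\cap D_M(A)$ and we are done directly. If $b\notin S$, then $|T|=k$ and $b\in T$, and I claim $mML(T,A)\le mML(S,A)$, which by optimality of $S$ forces equality and shows $T$ is also an optimal set containing a maximal act; but since the statement only asserts $S^*_k(A)\cap D_M(A)\neq\emptyset$ and the $\arg\min$ is picked arbitrarily among optimal sets, it suffices to exhibit \emph{one} optimal set meeting $D_M(A)$, namely $T$. To prove the claim, note $T'=A\setminus T = (S'\setminus\{b\})\cup\{a\}$, and estimate
\begin{equation}
mML(T,A)=\min_{c\in T}\max_{c'\in T'}\uexpe(c'-c)\le \max_{c'\in T'}\uexpe(c'-b),
\end{equation}
so it is enough to show $\max_{c'\in T'}\uexpe(c'-b)\le mML(S,A)$. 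For $c'\in S'\setminus\{b\}$ we have, using superadditivity of $\uexpe$ ($\uexpe(x+y)\le\uexpe(x)+\uexpe(y)$) together with $\uexpe(a-b)<0$,
\begin{equation}
\uexpe(c'-b)\le \uexpe(c'-a)+\uexpe(a-b) < \uexpe(c'-a)\le ML(a,S')=mML(S,A),
\end{equation}
and for $c'=a$ we get $\uexpe(a-b)<0$; whether $0\le mML(S,A)$ or not, the original $a$ being a dominated act that attains the minimum makes this comparison go through once one checks $mML(S,A)\ge 0$ is impossible only when... — this is the delicate point. The hard part will be making the last inequality clean in the sign-sensitive regime: if $mML(S,A)<0$ then by \cref{thm:1} some element of $S$ dominates all of $S'$, and one shows that element can be taken maximal, handling everything; if $mML(S,A)\ge 0$, then $\uexpe(a-b)<0\le mML(S,A)$ and the bound above closes. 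So I would split on the sign of $mML(S,A)=mML^*_k(A)$ from the outset, using \cref{cor:weak-consistency-negative} for the negative case (there $D_M(A)\subseteq S$ directly) and the swapping argument only for the non-negative case, where the strict inequality $\uexpe(a-b)<0$ gives the needed slack.
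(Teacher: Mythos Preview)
Your argument is essentially correct and takes a genuinely different route from the paper. The paper's proof is more direct: after disposing of the easy cases ($|A|\le k$ or $mML^*_k(A)<0$), it fixes the minimizer $a_{i^*}\in S$ and shows \emph{directly} that $a_{i^*}$ is maximal, by proving $\uexpe(a_{i^*}-a_\ell)\ge 0$ for every $a_\ell\in A$. This is done via a counting argument that leans on the structure of \cref{alg:S^*_k}: for every $a_\ell$ there are at least $k$ acts $a_m$ with $\uexpe(a_m-a_\ell)\ge mML^*_k(A)$, so whenever $a_{i^*}$ itself fails the bound one of those $a_m$ must lie in $S'$, and then subadditivity of $\uexpe$ closes the inequality. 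Your swap argument is more elementary (it does not appeal to the algorithm) and is in fact the same style the paper later uses for the maximin criterion in \cref{lem:weak_consist_reg}.

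One point needs cleaning up. You phrase the conclusion as ``$mML(T,A)\le mML(S,A)$, forcing equality, so it suffices to exhibit \emph{one} optimal set meeting $D_M(A)$.'' That last step is not valid as stated, because $S^*_k(A)$ is an \emph{arbitrary} optimal set, so you must show that \emph{every} optimal set meets $D_M(A)$. Fortunately your own bounds are strict: for $c'\in S'\setminus\{b\}$ you get $\uexpe(c'-b)\le\uexpe(c'-a)+\uexpe(a-b)<\uexpe(c'-a)\le mML(S,A)$, and for $c'=a$ you get $\uexpe(a-b)<0\le mML(S,A)$ in the non-negative regime. Hence $mML(T,A)<mML(S,A)$, contradicting the optimality of $S$. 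The correct framing is therefore a proof by contradiction: the minimizer $a$ in \emph{any} optimal $S$ must already lie in $D_M(A)$, since otherwise the swap strictly improves. With that adjustment your proof is complete.
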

\begin{proof}
For brevity, define $S\coloneqq S^*_k(A)$ and $S'\coloneqq A\setminus S^*_k(A)$.
If $|A|\le k$ then $S=A$ by \cref{lem:monotone} and therefore $S\cap D_M(A)=D_M(A)\neq\emptyset$.
If $mML(S,A)<0$ then the statement follows from \cref{cor:weak-consistency-negative}.
So, suppose that $|A|>k$ and $mML(S,A)\ge 0$. Let
\begin{align}
\label{eq:proof:ai*def}
a_{i^*}
& \coloneqq \arg \min_{a_i\in S}\max_{a_j \in S'}\uexpe(a_j-a_i),
\\
a_{j^*}&\coloneqq \arg \max_{a_j\in S'}\uexpe(a_j-a_{i^*}). \\
\intertext{Note that}
    0\le mML(S,A)&=\min_{a_i\in S}\max_{a_j \in S'}\uexpe(a_j-a_i)=\uexpe(a_{j^*}-a_{i^*}),
\end{align}
and it follows that
\begin{align}
\label{eq:a_j*i*-nonneg}
&\uexpe(a_{j^*}-a_{i^*}) \ge 0, \\
\label{eq:a_i*_k}
\forall a_j \in S',\ &
\uexpe(a_{j}-a_{i^*})\leq \uexpe(a_{j^*}-a_{i^*}) \\
\label{eq:proof:foralliexistj}
\forall a_i \in A,\ 
\forall j \in S[i],\ &
\uexpe(a_{j}-a_{i}) \geq \uexpe(a_{j^*}-a_{i^*})
\end{align}
where $S[i]$ is defined as in \cref{alg:S^*_k}.
\Cref{eq:proof:foralliexistj} holds because,
from the algorithm,
we know that
\begin{equation}
    M[i]=\min_{j\in S[i]}\uexpe(a_{j}-a_{i})
    \ge 
    M[i^*]
    =\uexpe(a_{j^*}-a_{i^*})
\end{equation}

We have now everything in place to show that $a_{i^*}$ is maximal, i.e. $\uexpe(a_{i^*}-a_{\ell}) \geq 0$ for all $a_{\ell}\in A$.
Fix any $a_{\ell}\in A$
and consider the set
\begin{equation}
\label{eq:proof:setofgoodacts}
B\coloneqq\{a_m\colon \uexpe(a_m - a_{\ell}) \geq \uexpe(a_{j^*}-a_{i^*})\}
\end{equation}
This set has at least $k$ elements by \cref{eq:proof:foralliexistj} and the definition of $S[i]$.
If $a_{i^*}\in B$, then we are done, by \cref{eq:a_j*i*-nonneg}.
Otherwise, $B$ must contain at least one element outside of $S$ and thus in $S'$, since $S$ has exactly $k$ elements and $a_{i^*}\in S$.
Choose $a_m\in B\cap S'$.
Then
\begin{align*}
\uexpe(a_{i^*}-a_{\ell}) & \geq \uexpe(a_m-a_{\ell}) - \uexpe(a_m-a_{i^*}) \\
& = \underbrace{\uexpe(a_m-a_{\ell}) - \uexpe(a_{j^*}-a_{i^*})}_{\text{non-negative by \cref{eq:proof:setofgoodacts}}} + \underbrace{\uexpe(a_{j^*}-a_{i^*})- \uexpe(a_m-a_{i^*})}_{\text{non-negative by \cref{eq:a_i*_k}}}  \geq 0.
\end{align*}
and thus, in this case,
the desired inequality also holds.
\end{proof}

From this theorem follow two simple corollaries~\cite[Corollaries 2 and 3]{k:budget2022}:
\begin{corollary}\label{cor:S_k_weekly}
For all $k\ge 1$, $S^*_k$ and $D^k_{mML}$ are weakly consistent with $D_M$. 
\end{corollary}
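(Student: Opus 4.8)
\textbf{Proof proposal for Corollary~\ref{cor:S_k_weekly}.}

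The plan is to derive this directly from \cref{lem:weak_consist_reg_mML} together with the definition of $D^k_{mML}$ in \cref{eq:mML}, since all the hard work has already been done in proving that theorem. Weak consistency of $S^*_k$ is almost immediate: \cref{lem:weak_consist_reg_mML} asserts exactly that $S^*_k(A)\cap D_M(A)\neq\emptyset$ for every $A\in\mathcal{A}$ and every $k\ge 1$, which is precisely the definition of weak consistency of the decision rule $A\mapsto S^*_k(A)$ with $D_M$. So the only substantive thing to check is that $D^k_{mML}$ is weakly consistent too, and for that I would split into the two cases appearing in \cref{eq:mML}.

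First I would treat the case where $mML^*_k(A)<0$ or $|A|\le k$. Here $D^k_{mML}(A)=D_M(A)$ by definition, so $D^k_{mML}(A)\cap D_M(A)=D_M(A)$, which is non-empty because $D_M(A)\neq\emptyset$ for any $A\in\mathcal{A}$ (maximality always returns at least one act, since $\succ$ is a strict partial order on a finite set and hence has a maximal element). In the remaining case, $mML^*_k(A)\ge 0$ and $|A|>k$, so $D^k_{mML}(A)=S^*_k(A)$, and then $D^k_{mML}(A)\cap D_M(A)=S^*_k(A)\cap D_M(A)\neq\emptyset$ by \cref{lem:weak_consist_reg_mML}. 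Combining the two cases gives weak consistency of $D^k_{mML}$ with $D_M$, completing the proof.

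There is essentially no obstacle here: the corollary is a bookkeeping consequence of the preceding theorem once one observes that the two branches of \cref{eq:mML} are each trivially weakly consistent. The only point requiring any care is the implicit fact that $D_M(A)$ is non-empty, which should either be recalled explicitly or left as a standard observation. I would keep the write-up to two or three lines, citing \cref{lem:weak_consist_reg_mML} for the $S^*_k$ part and for the ``otherwise'' branch of $D^k_{mML}$, and noting the ``$D_M(A)=D_M(A)$'' triviality for the other branch.
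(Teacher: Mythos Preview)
Your proposal is correct and matches the paper's approach: the paper presents this corollary without proof, simply as an immediate consequence of \cref{lem:weak_consist_reg_mML}, and your two-case argument unpacking \cref{eq:mML} is exactly the natural way to fill in the details.
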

\begin{corollary}\label{cor:S_k_strongly}
$S^*_1$ and $D^1_{mML}$ are strongly consistent with $D_M$. 
\end{corollary}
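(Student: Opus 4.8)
The plan is to derive strong consistency directly from the weak consistency already established in \cref{lem:weak_consist_reg_mML}, exploiting the fact that at $k=1$ the budget forces the returned set to be a singleton. First I would observe that, for $k=1$, the feasible region of the minimisation in \cref{eq:S_optim_minmax} consists precisely of the singletons $\{a\}$ with $a\in A$, so $S^*_1(A)$ is always a one-element set (when $|A|=1$ this set is $A$ itself, in agreement with the remark following \cref{eq:S_optim_minimax_value}). Hence there is a single $a\in A$ with $S^*_1(A)=\{a\}$.

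Next I would invoke \cref{lem:weak_consist_reg_mML} with $k=1$, which gives $S^*_1(A)\cap D_M(A)\neq\emptyset$, i.e.\ $\{a\}\cap D_M(A)\neq\emptyset$. The only way a nonempty singleton can have nonempty intersection with a set is to be contained in it, so $a\in D_M(A)$ and therefore $S^*_1(A)=\{a\}\subseteq D_M(A)$. This is exactly strong consistency of $S^*_1$ with $D_M$.

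For $D^1_{mML}$ I would simply unfold its definition \cref{eq:mML}. In the branch where $mML^*_1(A)<0$ or $|A|\le 1$, we have $D^1_{mML}(A)=D_M(A)\subseteq D_M(A)$ trivially; in the remaining branch $D^1_{mML}(A)=S^*_1(A)$, which is contained in $D_M(A)$ by the argument above. In either case $D^1_{mML}(A)\subseteq D_M(A)$, as required.

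There is no real obstacle here: the statement is a near-immediate corollary of \cref{lem:weak_consist_reg_mML}. The only point worth making explicit is the elementary observation that at $k=1$ weak consistency ($S^*_1(A)\cap D_M(A)\neq\emptyset$) upgrades automatically to strong consistency ($S^*_1(A)\subseteq D_M(A)$) because $S^*_1(A)$ has at most one element — an upgrade that fails for $k\ge 2$, which is why \cref{cor:S_k_weekly} cannot be strengthened in general.
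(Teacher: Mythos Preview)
Your proposal is correct and matches the paper's intended approach: the paper states \cref{cor:S_k_strongly} as an immediate corollary of \cref{lem:weak_consist_reg_mML} without giving any explicit proof, and your argument---that a singleton with nonempty intersection with $D_M(A)$ must lie inside it---is precisely the one-line observation the paper leaves implicit.
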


We conclude this section by noting that \cref{alg:S^*_k} gives us an efficient way to compute the decision rule $D^k_{mML}$, which is weakly consistent with the usual maximality criterion by \cref{lem:weak_consist_reg_mML}. However, we will see in the empirical experiment that the observed consistency is indeed kind of weak, and in any case weaker than for the maximin criterion, studied in the next section.

\section{Maximin criterion}\label{sec:maximin}

\subsection{Definition}

In the previous formulation of the problem, the minimax criterion is expressed as the fact that given a subset $S$ to us, for each possible $a\in S$ we select, the adversary is then free to choose $a'$ among the remaining options $S'=A \setminus S$ for which it will bring the highest possible gain to him (or the highest possible loss to us). It then makes sense to look for the set that minimizes this possible loss.  

Conversely, given a subset $S \subseteq A$, we could consider that the adversary first picks any action within $S'\coloneqq A\setminus S$, knowing that we are then free to choose any action within $S$ to limit our loss as much as possible. This is expressed as a maximin criterion as follows:
\begin{equation}\label{eq:k-maxmin-regret}
MmL(S,A)\coloneqq\max_{a'\in S'}\min_{a \in S}\uexpe(a'-a).
\end{equation}
Note that $MmL(A,A)=-\infty$ because for $S=A$ we have $S'=\emptyset$,
and as before we take the maximum over the empty set to be $-\infty$.
For any $S\subseteq A$, the following relation holds:
\begin{equation}\label{eq:MmL<=mML}
\max_{a'\in S'}\min_{a \in S}\uexpe(a'-a) \leq \min_{a \in S}\max_{a'\in S'}\uexpe(a'-a).
\end{equation}
Thus, $MmL(S,A)\leq mML(S,A)$. We will show further that this inequality can be strict, 
but first we will explore some properties of $MmL(S,A)$.

\subsection{Properties of \texorpdfstring{$MmL(S,A)$}{MmL(S,A)}}

We can show that the maximin criterion satisfies properties that are very similar to those of the minimax criterion.

\begin{lemma}\label{lem:MmL:monotone}
For any $\emptyset\neq S\subseteq T \subseteq A\in\mathcal{A}$, we have that $$MmL(S,A) \geq MmL(T,A).$$
\end{lemma}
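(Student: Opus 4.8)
The plan is to mirror the proof of \cref{lem:monotone}, the minimax analogue, through a short two-step chain of inequalities. The only structural input needed is the elementary observation that $S\subseteq T$ forces the complements to reverse the inclusion: $T'\coloneqq A\setminus T\subseteq A\setminus S=S'$.

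With that in hand, I would bound $MmL(T,A)$ from above in two moves. The first move shrinks the inner minimisation domain: since $S\subseteq T$, for every fixed $a'$ we have $\min_{a\in T}\uexpe(a'-a)\le\min_{a\in S}\uexpe(a'-a)$, because a minimum taken over a smaller set can only grow. Applying $\max_{a'\in T'}$ to both sides preserves this, giving $MmL(T,A)=\max_{a'\in T'}\min_{a\in T}\uexpe(a'-a)\le\max_{a'\in T'}\min_{a\in S}\uexpe(a'-a)$. The second move enlarges the outer maximisation domain: since $T'\subseteq S'$, we get $\max_{a'\in T'}\min_{a\in S}\uexpe(a'-a)\le\max_{a'\in S'}\min_{a\in S}\uexpe(a'-a)=MmL(S,A)$. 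Concatenating the two yields $MmL(T,A)\le MmL(S,A)$, which is the claim. I would present this as a four-line \texttt{align} display, as in \cref{lem:monotone}, annotating each step with the inclusion it uses.

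The last thing to check is the degenerate case. If $T=A$ then $T'=\emptyset$, so $MmL(T,A)=-\infty$ by the stated convention and the inequality is immediate; the same convention also makes the first chain step vacuously valid when $T'=\emptyset$. In every remaining case $T'\neq\emptyset$ and $S'\supseteq T'$, so all the maxima and minima are over nonempty finite sets and the argument runs verbatim.

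I do not expect a genuine obstacle here: the proof is purely order-theoretic and carries no probabilistic content. The only point requiring care is keeping the two monotonicity facts pointed the right way — shrinking the domain of a $\min$ increases it, enlarging the domain of a $\max$ increases it — which is precisely the pattern already exploited for $mML$ in \cref{lem:monotone}.
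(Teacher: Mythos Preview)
Your proposal is correct and follows essentially the same approach as the paper: the paper's proof is precisely the four-line \texttt{align} display you describe, first shrinking the inner minimum from $T$ to $S$ and then enlarging the outer maximum from $T'$ to $S'$. Your explicit treatment of the degenerate case $T=A$ is a minor addition the paper leaves implicit via the $-\infty$ convention.
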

\begin{proof}
Recall $S'\coloneqq A\setminus S$
and $T'\coloneqq A\setminus T$.
Then
\begin{align}
MmL(T,A) & = \max_{a' \in T'}\min_{a\in T}\uexpe(a'-a) \\
& \leq \max_{a' \in T'}\min_{a\in S}\uexpe(a'-a)    &&(\text{since } S \subseteq T) \\
& \leq \max_{a' \in S'}\min_{a\in S}\uexpe(a'-a)    &&(\text{since } T' \subseteq S') \\
& = MmL(S,A).
\end{align}
\end{proof}
The next property shows that  $MmL(S,A)$ is negative if and only if every element in $S'$ is dominated by some maximal element in $S$. This is similar but not quite the same as the corresponding property for the minimax criterion in \cref{thm:1}: the quantifiers are swapped.

\begin{theorem}\label{thm:MmL:negative}
$MmL(S,A)<0$
if and only if
for all $a' \in S':=A \setminus S$, there is an $a \in S$ such that $a\succ a'$.
\end{theorem}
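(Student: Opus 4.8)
The plan is to unwind the definition of $MmL(S,A)<0$ through the min–max expression and peel off the quantifiers in the order they appear, exactly as in the proof of \cref{thm:1} but with the outer quantifier being a maximum rather than a minimum. Concretely, $MmL(S,A)<0$ reads $\max_{a'\in S'}\min_{a\in S}\uexpe(a'-a)<0$, and since $S'$ is finite (and the $S=A$ case where $S'=\emptyset$ makes $MmL(S,A)=-\infty<0$, which matches the vacuously true right-hand side), the max being negative is equivalent to: for every $a'\in S'$, $\min_{a\in S}\uexpe(a'-a)<0$. In turn, the minimum over the finite set $S$ being negative is equivalent to the existence of some $a\in S$ with $\uexpe(a'-a)<0$. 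Finally, $\uexpe(a'-a)<0$ is, by the duality $\uexpe(f)=-\lexpe(-f)$, the same as $\lexpe(a-a')>0$, which is precisely $a\succ a'$ by definition of the strict dominance relation. Chaining these equivalences gives: $MmL(S,A)<0$ iff for all $a'\in S'$ there is $a\in S$ with $a\succ a'$.

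The only subtlety is the phrase ``dominated by some \emph{maximal} element'' in the surrounding text: strictly speaking the equivalence as I would prove it only asserts domination by \emph{some} element of $S$. So I would add a short remark (or include it in the proof) that if $a' \in S'$ is dominated by $a \in S$, then $a'$ is dominated by some maximal element of $A$ — and that maximal element automatically lies in $S$, since every maximal element of $A$ must lie in $S$ when $MmL(S,A)<0$. The latter fact follows because $MmL(S,A)<0$ implies, by the equivalence just proved, that no element of $S'$ is maximal (each is strictly dominated), so $D_M(A)\subseteq S$; and the existence of a maximal element above any given act follows from the standard fact that $\succ$ restricted to the finite set $A$ is a strict partial order, hence has a maximal element above each element (chasing a dominating chain, which must terminate since $A$ is finite and $\succ$ is irreflexive and transitive). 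Transitivity of $\succ$ here uses superadditivity of $\lexpe$: $\lexpe(a-a')>0$ and $\lexpe(a''-a)>0$ give $\lexpe(a''-a')\ge \lexpe(a''-a)+\lexpe(a-a')>0$.

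I expect no real obstacle: the core argument is a two-line quantifier manipulation identical in structure to \cref{thm:1}, and the only thing to be careful about is not conflating ``dominated by some element of $S$'' with ``dominated by some maximal element of $S$'', which is why I would state the theorem's conclusion in the weaker ``some $a\in S$'' form in the formal statement and relegate the maximality refinement to a one-sentence corollary or remark. It is also worth a single sentence flagging the contrast with \cref{thm:1}: there the $\exists a\,\forall a'$ ordering forces a \emph{single} $a$ to dominate the whole of $S'$, whereas here each $a'$ may be dominated by a \emph{different} $a$, which is exactly why the maximin value can be strictly smaller than the minimax value and why $MmL(S,A)<0$ is a strictly weaker condition on $S$ than $mML(S,A)<0$.
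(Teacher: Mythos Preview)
Your core argument is correct and is exactly the paper's proof: a direct unwinding of the max--min definition into quantifiers followed by the duality $\uexpe(a'-a)<0\iff\lexpe(a-a')>0$. Your additional remarks---flagging that the surrounding prose says ``maximal element in $S$'' while the theorem as stated only gives ``some $a\in S$'', and spelling out the quantifier swap relative to \cref{thm:1}---are accurate and in fact more careful than the paper, which leaves both points implicit; they are fine as commentary but not needed for the proof of the theorem as formally stated.
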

\begin{proof}
By the definition, we have
\begin{align*}
MmL(S,A) < 0 &  \iff \max_{a'\in S'} \min_{a \in S}\uexpe(a' - a) < 0\\
& \iff \forall a' \in S',\ \exists a \in S,\ \uexpe(a' - a) < 0 \\
& \iff \forall a' \in S',\ \exists a \in S,\ \lexpe(a - a') > 0
\end{align*}
\end{proof}
So, contrary to \cref{thm:1}, the act $a \in S$ dominating a given $a' \in S'$ in \cref{thm:MmL:negative} can be different for each $a'$. This has an important practical impact, as will be confirmed in further experiments. \Cref{thm:MmL:negative} also leads to the following corollary.
\begin{corollary}\label{cor:MmL:consistency:negative}
If $MmL(S,A) < 0$ then $D_M(A) \subseteq S$.
\end{corollary}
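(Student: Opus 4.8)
The plan is to deduce \Cref{cor:MmL:consistency:negative} directly from \Cref{thm:MmL:negative}, exactly mirroring how \Cref{cor:weak-consistency-negative} follows from \Cref{thm:1}. So suppose $MmL(S,A)<0$. By \Cref{thm:MmL:negative}, for every $a'\in S'=A\setminus S$ there exists $a\in S$ with $a\succ a'$. In particular, every $a'\in S'$ is dominated by some act in $A$, hence no element of $S'$ can belong to $D_M(A)$, since $D_M(A)$ consists precisely of the undominated acts. Therefore $D_M(A)\cap S'=\emptyset$, which, since $S'=A\setminus S$ and $D_M(A)\subseteq A$, is equivalent to $D_M(A)\subseteq S$.

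The only subtlety to keep in mind is the edge case $S'=\emptyset$, i.e.\ $S=A$: then $MmL(S,A)=-\infty<0$ holds vacuously and the conclusion $D_M(A)\subseteq A$ is trivially true, so the argument goes through without modification. I would state the proof in two or three lines, referencing \Cref{thm:MmL:negative} for the key implication and the definition of $D_M(A)$ for the conclusion.

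There is essentially no obstacle here — the corollary is a one-step consequence of the preceding theorem, and the proof is purely a matter of unpacking the definition of maximality. If anything, the only thing worth being careful about is not to over-claim: the hypothesis $MmL(S,A)<0$ is a sufficient condition for $D_M(A)\subseteq S$ but (unlike the iff statements in the theorems) the converse need not be mentioned, so the corollary should be phrased as a one-directional implication, which it already is.

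\begin{proof}
Suppose $MmL(S,A)<0$. By \Cref{thm:MmL:negative}, for every $a'\in S'\coloneqq A\setminus S$ there is some $a\in S$ with $a\succ a'$. Hence every $a'\in S'$ is dominated by some act in $A$, so no element of $S'$ is maximal, i.e.\ $D_M(A)\cap S'=\emptyset$. Since $S'=A\setminus S$ and $D_M(A)\subseteq A$, this is equivalent to $D_M(A)\subseteq S$.
\end{proof}
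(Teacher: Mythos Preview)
Your proof is correct and follows essentially the same route as the paper's: invoke \Cref{thm:MmL:negative} to conclude every $a'\in S'$ is dominated, hence non-maximal, so $D_M(A)\subseteq S$. Your version is arguably a touch cleaner in noting explicitly that $D_M(A)\cap S'=\emptyset$ is equivalent to the inclusion, and in handling the edge case $S'=\emptyset$.
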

\begin{proof}
By \cref{thm:MmL:negative}, each $a' \in S'$ is dominated by a maximal element in $S$. Therefore, $S'$ contains no maximal elements, so consequently all maximal elements must be in $S$.
\end{proof}

Similar to the minimax criterion, we can define an optimal maximin subset of a given size as \begin{equation}\label{eq:Maximin_opti_set}S^+_k(A)\coloneqq\arg\min_{\substack{\emptyset\neq S\subseteq A\\ |S|\le k}} MmL(S, A)
\end{equation}
and we denote the minimum value by
\begin{equation}\label{eq:Maximin_opti_set_value}
MmL^+_k(A)\coloneqq MmL(S^+_k(A), A).
\end{equation}
Note that $S^+_k(A)$ is also not guaranteed to be unique. If this occurs, we assume one set is picked at random, as these are indifferent sets. This can be employed as a budgeted decision rule as follows:
\begin{equation}\label{eq:MmL}
D^k_{MmL}(A)\coloneqq\begin{cases} D_M(A) &  \text{ if } MmL^+_k(A) < 0 \text{ or } |A|\le k\\ S^+_k(A) & \text{ otherwise}.\end{cases}
\end{equation}

\subsection{Computing \texorpdfstring{$S^+_k(A)$}{S+}}

Let us now deal with the problem of computing $S^+_k(A)$. A first question that arises is then to know whether $S^+_k(A)$ and $S^*_k(A)$ do not coincide in general, in which case one could use \cref{alg:S^*_k} to solve the problem in polynomial time. Let us first give the solution for our previous example.

\begin{example}\label{ex:MmL1}
Consider the same situation as in \cref{ex:Intro_IP}. For this situation and various values of $k$, we have that
\begin{itemize}
    \item $S^+_1(A) = \{a_4\}$ with $MmL(S^+_1,A) = 3.0$, 
    \item $S^+_2(A) = \{a_1,a_2\}$ with $MmL(S^+_2,A) = 1.4$, 
    \item $S^+_3(A) = \{a_1,a_2,a_3\}$ or $\{a_2,a_3,a_4\}$ with $MmL(S^+_3,A) = 1.0$ and 
    \item $S^+_4(A) = \{a_1,a_2,a_3,a_4\}$ with $MmL(S^+_4,A) = -1.1$,
    \end{itemize}
Again, in the case of $S^+_3(A)$, the rule would return one of the two sets, picking one at random.
\end{example}

In the above example, we get the same results for $S^+_k(A)$ and values $MmL^+_k(A)$ as for the minimax criterion. This shows, among other things, that $S^+_k(A) \not\subseteq S^+_{k+1}(A)$ and that a greedy approach applied to maximin will generally result in sub-optimal solutions. The next example however shows that such equalities do not hold in general, and experiments will later confirm that this is often the case. 

\begin{example}\label{ex:MmL2}
Suppose that we have the same space $\Omega$, the credal set specified by \cref{eq:credal} with the set of acts listed in \cref{tab:acts_for_MmL}.
\begin{table}
\begin{minipage}{0.36\textwidth}
	\begin{center}
\begin{tabular}{c|ccc}
 & $\omega_1$ & $\omega_2$ & $\omega_3$ \\ \hline
$a_1$ & 6 & 4 & 2 \\
$a_2$ & 7 & 1 & 4 \\
$a_3$ & 10 & 4 & 8 \\
$a_4$ & 2 & 7 & 2 \\
$a_5$ & 7 & 1 & 9 \\ 
$a_6$ & 7 & 8 & 2 \\ \hline
\end{tabular}
\vspace{0.5em}
\caption{Acts of \cref{ex:MmL2}}\label{tab:acts_for_MmL}
	\end{center}
\end{minipage}
\hfill
\begin{minipage}{0.63\textwidth}
\begin{center}
\begin{tabular}{|c|*{6}{c}|}\hline
\backslashbox{$j$}{$i$}
&\makebox[2em]{1}&\makebox[2em]{2}&\makebox[2em]{3}
&\makebox[2em]{4}&\makebox[2em]{5}&\makebox[2em]{6}\\\hline
	1 & - & 3.0 & 0.0 & 4.0 & 3.0 & $-0.7$ \\ 
	2 & 1.3 & - & $-3.0$ & 5.0 & 0.0 & 0.6\\
	3 & 4.6 & 3.3 & - & 8.0 & 3.0 & 3.9 \\
	4 & 3.0 & 6.0 & 3.0 & - & 6.0 & $-1.0$\\
	5 & 2.8 & 1.5 & $-1.8$ & 5.6  & - & 2.1\\ 
    6 & 4.0 & 7.0 & 4.0 & 5.0  & 7.0 & - \\ \hline
\end{tabular}
\vspace{0.5em}
\caption{Values of $\uexpe(a_j - a_i)$}\label{table:example:MmL2}
\end{center}
\end{minipage}
\end{table}

According to values $\uexpe(a_j - a_i), \forall j \neq i$ in \cref{table:example:MmL2}, we see that all maximal elements of $A$ are $D_M(A)=\{a_3,a_6\}$ (only lines $3$ and $6$ of \cref{table:example:MmL2} are non-negative for every column) and the optimal solution $S^*_k(A)$ for each $k$ are given as follows:
\begin{itemize}
    \item $S^*_1(A) = \{a_6\}$ with $mML^*_1(A) = 3.9$, 
    \item $S^*_2(A) = \{a_3,a_6\}$ with $mML^*_2(A) = 2.1$, 
    \item $S^*_3(A) = \{a_3,a_4,a_6\}$ with $mML^*_3(A) = 0$,
    \item $S^*_4(A) = \{a_1,a_3,a_4,a_6\}$ with $mML^*_4(A) = -1.8$ and 
    \item $S^*_5(A) = \{a_1,a_3,a_4,a_5,a_6\}$ with $mML^*_5(A) = -3.0$.
    \end{itemize}
For $S^+_k(A)$, we have 
\begin{itemize}
    \item $S^+_1(A) = \{a_6\}$ with $MmL^+_1(A) = 3.9$, 
    \item $S^+_2(A) = \{a_3,a_6\}$ with $MmL^+_2(A) = -0.7$, 
    \item $S^+_3(A) = \{a_1,a_3,a_6\}$ with $MmL^+_3(A) = -1.0$,
    \item $S^+_4(A) = \{a_1,a_3,a_4,a_6\}$ with $MmL^+_4(A) = -1.8$ and 
    \item $S^+_5(A) = \{a_1,a_3,a_4,a_5,a_6\}$ with $MmL^+_5(A) = -3.0$.
    \end{itemize}
\end{example}

This example confirms that $mML^*_k(A) \neq MmL^+_k(A)$ is possible, even when $S^*_k(A)=S^+_k(A)$ (see $k=2$ and $k=3$ in \cref{ex:MmL2}), and that it is possible to have $S^*_k(A) \neq S^+_k(A)$ (see $k=3$ in \cref{ex:MmL2}). This confirms that our previous algorithmic solution cannot be applied to find $S^+_k(A)$ in general. Notable exceptions where the two criteria will always coincide are when $k=1$ and $k\ge |A|-1$, as show the next two results.

\begin{theorem}\label{thm:S^+=S^*}
    For $k=1$ and all $k\ge |A|-1$ we have that
    \begin{equation}
        S^*_k(A)=S^+_k(A)
        \text{ and } mML^*_k(A)=MmL^+_k(A).
    \end{equation}
\end{theorem}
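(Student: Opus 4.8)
The statement splits into two regimes, $k=1$ and $k\ge|A|-1$, and I would handle them separately since the reasons they hold are genuinely different.

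For $k=1$: here any admissible $S$ has $|S|=1$, say $S=\{a\}$, so $S'=A\setminus\{a\}$ and both criteria collapse to the same thing. Indeed $mML(\{a\},A)=\min_{a\in\{a\}}\max_{a'\in S'}\uexpe(a'-a)=\max_{a'\in S'}\uexpe(a'-a)$, and likewise $MmL(\{a\},A)=\max_{a'\in S'}\min_{a\in\{a\}}\uexpe(a'-a)=\max_{a'\in S'}\uexpe(a'-a)$, because the inner $\min$ and the outer $\min$ are each over a singleton. So the objective functions coincide pointwise over all admissible $S$, hence their $\arg\min$ sets and minimum values coincide. This case is essentially a one-line observation.

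For $k\ge|A|-1$: by \cref{lem:monotone} and \cref{lem:MmL:monotone}, enlarging $S$ never hurts, so the optimal $S$ has size exactly $\min(k,|A|)$. If $k\ge|A|$ then $S^*_k(A)=S^+_k(A)=A$ and both values are $-\infty$, trivially. The substantive case is $k=|A|-1$, where the optimal $S$ has exactly $|A|-1$ elements, i.e.\ $S'=\{a'\}$ is a singleton. Then $mML(S,A)=\min_{a\in S}\max_{a'\in\{a'\}}\uexpe(a'-a)=\min_{a\in S}\uexpe(a'-a)$ and $MmL(S,A)=\max_{a'\in\{a'\}}\min_{a\in S}\uexpe(a'-a)=\min_{a\in S}\uexpe(a'-a)$; the outer $\max$ (resp.\ inner $\max$) over the singleton $S'$ is vacuous. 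So again the two objectives agree on every admissible $S$ of size $|A|-1$, and since by monotonicity the optimum over $|S|\le k$ is attained at size exactly $|A|-1$, the $\arg\min$ sets and optimal values coincide. One should also double-check the degenerate sub-case $|A|=1$, where $k\ge|A|-1=0$ forces $S=A$ and both values are $-\infty$.

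The only mild subtlety — and the one place I would be careful — is the interplay between ``$\le k$'' in the constraint and the claim that the optimum sits at a specific cardinality: one must invoke the monotonicity lemmas to argue that restricting attention to sets of the extremal admissible size loses nothing, and that on sets of that size the minimax and maximin expressions are literally the same function (because one of the two quantifiers ranges over a singleton). There is no genuine minimax-equals-maximin theorem needed here; the equality is forced by the degeneracy of the relevant quantifier, so the ``hard part'' is really just bookkeeping over the cases $k=1$, $k=|A|-1$, $k\ge|A|$, and $|A|=1$.
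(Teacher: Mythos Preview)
Your proposal is correct and follows essentially the same approach as the paper: both arguments rely on the observation that when one of $S$ or $S'$ is a singleton (forced respectively by $k=1$ and $k=|A|-1$), the min--max and max--min expressions coincide pointwise, with monotonicity (\cref{lem:monotone,lem:MmL:monotone}) used to reduce the $|S|\le k$ constraint to the extremal cardinality. Your version is arguably a bit more explicit than the paper's about invoking monotonicity to justify restricting attention to sets of maximal admissible size, and you also flag the degenerate $|A|=1$ case, but the substance is the same.
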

\begin{proof}
For $k\ge |A|$,
by \cref{lem:monotone,lem:MmL:monotone},
\begin{equation}
S^*_k(A)=S^+_k(A)=A\text{ and } mML^*_k(A)=MmL^+_k(A)=-\infty.
\end{equation}

For $k=1$, it holds because 
\begin{align*}
S^*_1(A) &= \arg\min_{S \in \mathcal{A}_1 } mML(S, A)\\
&= \arg\min_{a\in A}\max_{a' \in A\setminus \{a\}}\uexpe(a' - a)\\
& = \arg\min_{a\in A}\max_{a' \in A\setminus \{a\}}\min_{a''\in \{a\}}\uexpe(a'-a'') \\
& = \arg\min_{S \in \mathcal{A}_1 } MmL(S, A) \\
& = S^+_1(A)
\end{align*}
and note that the minima are achieved at the same values
in each of the above steps so $mML^*_1(A)=MmL^+_1(A)$.

For $k=|A|-1$, it holds because (with $n\coloneqq |A|$)
\begin{align*}
S^*_{n-1}(A) &= \arg\min_{S \in \mathcal{A}_{n-1} } mML(S, A)\\
& = \arg\min_{S \in \mathcal{A}_{n-1}}\left(\min_{a\in S}\max_{a' \in A \setminus S}\uexpe(a' - a)\right)\\
\intertext{and now, writing $S\in\mathcal{A}_{n-1}$ as $S=A\setminus\{a'\}$ for $a'\in A$,}
& = \arg\min_{a'\in A}\left(\min_{a\in A\setminus \{a'\}}\max_{a'' \in A\setminus(A\setminus \{a'\})}\uexpe(a'' - a)\right) \\
& = \arg\min_{a' \in A}\left(\min_{a\in A\setminus \{a'\}}\uexpe(a' - a)\right),
\end{align*}
and 
\begin{align*}
S^+_{n-1}(A)
& = \arg\min_{S \in \mathcal{A}_{n-1} } MmL(S, A) \\
&=  \arg\min_{S \in \mathcal{A}_{n-1}}\left(\max_{a' \in A \setminus S}\min_{a\in S}\uexpe(a' - a)\right)\\
\intertext{and again, writing $S\in\mathcal{A}_{n-1}$ as $S=A\setminus\{a'\}$ for $a'\in A$,}
&= \arg\min_{a' \in A}\left(\max_{a'' \in A\setminus(A\setminus \{a'\})}\min_{a\in A\setminus \{a'\}} \uexpe(a'' - a)\right)\\ 
& = \arg\min_{a' \in A} \left(\min_{a\in A\setminus \{a'\}}\uexpe(a' - a)\right).
\end{align*}
and again note that the minima are achieved at the same values
in each of the above steps so $mML^*_{n-1}(A)=MmL^+_{n-1}(A)$.
\end{proof}

Let us now consider the problem of finding $S^+_k(A)$ as well as the corresponding $MmL^+_k(A)$. In order to explain our algorithm, we first consider another question: given a value $\alpha$, can we find an $S \subseteq A$ of size $k$ such that $MmL(S,A) \leq \alpha$? Finding $S^+_k(A)$ then amounts to find the lowest $\alpha$ for which the answer is yes. Denoting as before by $e_{ij}\coloneqq\uexpe(a_{j}-a_{i})$ the upper expectation, let us first notice that $MmL(S,A)$ can only take as value negative infinity (if $S=A$), or one of the finite values $e_{ij}$.

Assume we choose $\alpha=e_{i'j'}$.
We should then put within $S$ elements $a_i$ such that those elements can be responses to the adversary choices leading to losses $e_{ij}$ lower than $\alpha$. The set $C_\alpha[i]$ of adversary choices for which a given element $a_i$ is an adequate response is given as 
\begin{equation}
C_\alpha[i]\coloneqq\{j\colon e_{ij} \leq \alpha\}.
\end{equation}
Consider for instance \cref{table:example:MmL2}, the value $\alpha=-1.0$ and $i=3$, then $C_{-1.0}[3]=\{2,5\}$ corresponding to the values $-3$ ($e_{32}$) and $-1.8$ ($e_{35}$). This means that if $a_3 \in S$, then it is a response to adversary choices $a_2,a_5$ (if those are not in the set $S$) leading to a value lower than $\alpha=-1.0$. Note that we can do this operation for any $i$ and any $\alpha$. Given this, there is a subset $S$ having $k$ elements leading to $MmL(S,A)\leq \alpha$ only if we can find $k$ elements whose union of sets $C_\alpha[i]$ includes all elements in $A\setminus S$, or formally if we can find a set $S$ such that
\begin{equation}\label{eq:condition_reachability}\cup_{i \in S} (C_\alpha[i]\setminus S)=A\setminus S.\end{equation}
This equation means that for every act $a_j$ in $S'=A\setminus S$ that the adversary can pick (right hand-side of the equation), there is at least one element $a_i$ in $S$ (the union of the left-hand side) we can pick such that $\uexpe(a_j - a_i) \leq \alpha$, i.e. one for which $j \in C_{\alpha}[i]$.

\begin{lemma}\label{lem:reachability-implies-maximin-upper-bound}
Let $\alpha\in\mathbb{R}\cup\{-\infty\}$,
and $\emptyset\neq S\subseteq A$.
Then \cref{eq:condition_reachability} implies
\begin{equation}
MmL(S,A)\le\alpha.
\end{equation}
\end{lemma}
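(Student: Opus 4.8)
The plan is to unfold the definition of $MmL(S,A)$ and show directly that the hypothesis \cref{eq:condition_reachability} forces the inner minimum, for every adversary choice, to be at most $\alpha$. Recall that
\[
MmL(S,A)=\max_{a'\in S'}\min_{a\in S}\uexpe(a'-a),
\]
where $S'=A\setminus S$. So it suffices to fix an arbitrary $a_j\in S'$ and prove $\min_{a\in S}\uexpe(a_j-a_i)\le\alpha$; taking the maximum over $a_j\in S'$ then yields the claim. (The edge case $S'=\emptyset$, i.e. $S=A$, gives $MmL(S,A)=-\infty\le\alpha$ trivially, and this is consistent with \cref{eq:condition_reachability} reading $\emptyset=\emptyset$.)

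Next I would exploit \cref{eq:condition_reachability} pointwise: since $j\in A\setminus S=\bigcup_{i\in S}(C_\alpha[i]\setminus S)$, there exists some $i\in S$ with $j\in C_\alpha[i]\setminus S$, in particular $j\in C_\alpha[i]$. By the definition $C_\alpha[i]=\{j\colon e_{ij}\le\alpha\}$ with $e_{ij}=\uexpe(a_j-a_i)$, this means $\uexpe(a_j-a_i)\le\alpha$ for that particular $a_i\in S$. Hence $\min_{a\in S}\uexpe(a_j-a)\le\uexpe(a_j-a_i)\le\alpha$, which is exactly what we needed for this fixed $a_j$.

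Finally, since the bound $\min_{a\in S}\uexpe(a_j-a)\le\alpha$ holds for every $a_j\in S'$, taking the maximum over $S'$ gives $MmL(S,A)=\max_{a_j\in S'}\min_{a\in S}\uexpe(a_j-a)\le\alpha$, completing the proof. I do not anticipate a genuine obstacle here; the statement is essentially a direct translation of the covering condition \cref{eq:condition_reachability} into the max–min expression, and the only things to be careful about are (i) reading off that $j\in C_\alpha[i]\setminus S$ in particular implies $j\in C_\alpha[i]$, so the set-difference by $S$ is harmless for the bound, and (ii) handling the empty-$S'$ case so the $-\infty$ convention is respected. A minor remark worth including is that the converse need not hold: a set $S$ can satisfy $MmL(S,A)\le\alpha$ through some $e_{ij}$ strictly below $\alpha$ without \cref{eq:condition_reachability} holding at the exact threshold $\alpha$, which is why the subsequent algorithm will sweep $\alpha$ over the finite set of values $\{e_{ij}\}$ rather than rely on this lemma as an equivalence.
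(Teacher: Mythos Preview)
Your proof is correct and follows essentially the same direct approach as the paper's: unfold the covering condition to find, for each $a_j\in S'$, a witness $a_i\in S$ with $e_{ij}\le\alpha$, then pass to the max--min. One small correction to your closing remark: the converse does in fact hold, since $MmL(S,A)\le\alpha$ means that for every $j\in S'$ there is some $i\in S$ with $e_{ij}\le\alpha$, i.e.\ $j\in C_\alpha[i]\setminus S$, which is exactly \cref{eq:condition_reachability}; the algorithm sweeps over the finite set $\{e_{ij}\}$ not because the equivalence fails, but simply because $MmL(S,A)$ can only take one of those values.
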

\begin{proof}
For every $a\in S$, define
\begin{equation}
C'_\alpha(S,a)\coloneqq \{a'\in S'\colon \uexpe(a'-a)\le\alpha\}.
\end{equation}
By Definition,
\begin{equation}
\forall a\in S,\,\forall a'\in C'_\alpha(S,a)\colon \uexpe(a'-a)\le\alpha.
\end{equation}
Consequently,
\begin{align}
\forall a'\in \cup_{\tilde{a}\in S} C'_\alpha(S,\tilde{a}),\,\exists a \in S\colon\uexpe(a'-a)\le\alpha
\end{align}
Since $\cup_{a\in S} C'_\alpha(S,a)=S'$ by assumption, equivalently,
\begin{align}
\forall a'\in S',\,\exists a \in S\colon\uexpe(a'-a)\le\alpha
\end{align}
or equivalently,
\begin{align}
\max_{a'\in S'}\min_{a \in S}\uexpe(a'-a)\le\alpha
\end{align}
which is what we had to show, by \cref{eq:k-maxmin-regret}.
\end{proof}
\begin{lemma}
Let $1\le k\le |A|$.
If \cref{eq:condition_reachability} holds for some $\alpha\in\mathbb{R}\cup\{-\infty\}$ and $S\subseteq A$ with $|S|=k$, then $MmL^+_k(A)\le\alpha$.
\end{lemma}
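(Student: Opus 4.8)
The plan is to derive this statement as an immediate consequence of the previous lemma together with the definition of $S^+_k(A)$ as a minimizer. First I would observe that the hypothesis supplies a set $S\subseteq A$ with $|S|=k$ for which \cref{eq:condition_reachability} holds at the given $\alpha$. Applying \cref{lem:reachability-implies-maximin-upper-bound} to this $S$ and this $\alpha$ directly yields $MmL(S,A)\le\alpha$.

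Next I would connect $MmL(S,A)$ to the optimal value $MmL^+_k(A)$. Since $S$ is a non-empty subset of $A$ with $|S|=k\le k$, it is feasible in the minimization \cref{eq:Maximin_opti_set} defining $S^+_k(A)$. Hence by definition of $S^+_k(A)$ as an $\arg\min$ and of $MmL^+_k(A)$ as the associated minimum value in \cref{eq:Maximin_opti_set_value}, we have
\begin{equation*}
MmL^+_k(A)=\min_{\substack{\emptyset\neq T\subseteq A\\ |T|\le k}} MmL(T,A)\le MmL(S,A)\le\alpha,
\end{equation*}
which is exactly the claim. One minor point worth spelling out: the condition $1\le k\le|A|$ guarantees that a set $S$ with $|S|=k$ exists and that the feasible region of the minimization is non-empty, so $MmL^+_k(A)$ is well-defined; this is why that hypothesis is included.

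There is essentially no obstacle here — the statement is a bookkeeping corollary whose only content is noting that a reachability witness at level $\alpha$ is a feasible point of the defining optimization with objective value at most $\alpha$. If anything, the only thing to be careful about is the edge case $\alpha=-\infty$, which forces $S=A$ and $k=|A|$ (since $C_{-\infty}[i]\setminus S=\emptyset$, so \cref{eq:condition_reachability} can only hold when $A\setminus S=\emptyset$); in that case \cref{lem:reachability-implies-maximin-upper-bound} gives $MmL(A,A)=-\infty\le\alpha$, and the inequality chain above still goes through verbatim.
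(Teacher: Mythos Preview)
Your proposal is correct and follows essentially the same approach as the paper: apply \cref{lem:reachability-implies-maximin-upper-bound} to obtain $MmL(S,A)\le\alpha$, then use that $S$ is feasible in the minimization defining $MmL^+_k(A)$ to conclude $MmL^+_k(A)\le MmL(S,A)\le\alpha$. The paper's proof is just a two-line version of this; your additional remarks on the role of the hypothesis $1\le k\le |A|$ and the $\alpha=-\infty$ edge case are fine but not needed.
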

\begin{proof}
Since $|S|=k$, by definition of $MmL^+_k(A)$ we have
\begin{align}
MmL^+_k(A)\le MmL(S,A)
\end{align}
Now use \cref{lem:reachability-implies-maximin-upper-bound}.
\end{proof}
\begin{lemma}
Let $1\le k\le |A|$.
Then \cref{eq:condition_reachability} holds for $\alpha=MmL^+_k(A)$ and $S=S^+_k(A)$, where $|S^+_k(A)|=k$.
\end{lemma}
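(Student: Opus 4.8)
The plan is to establish the statement in two steps: first observe that $S^+_k(A)$ may be taken to have exactly $k$ elements, and then verify \cref{eq:condition_reachability} directly from the definition of $MmL$. For the cardinality claim I would argue from \cref{lem:MmL:monotone}: if a minimiser $S$ of $MmL(\cdot,A)$ over non-empty sets of size at most $k$ had $|S|<k\le |A|$, then any $T$ with $S\subseteq T\subseteq A$ and $|T|=k$ satisfies $MmL(T,A)\le MmL(S,A)=MmL^+_k(A)$ while still respecting the budget, hence is also a minimiser; so a minimiser of size exactly $k$ always exists, and we let $S^+_k(A)$ denote such a set.

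Next, write $S\coloneqq S^+_k(A)$, $S'\coloneqq A\setminus S$, and $\alpha\coloneqq MmL^+_k(A)=MmL(S,A)$. The inclusion $\cup_{i\in S}(C_\alpha[i]\setminus S)\subseteq S'$ is immediate, since each $C_\alpha[i]\setminus S$ is contained in $A\setminus S=S'$. For the reverse inclusion I would first dispose of the degenerate case $S'=\emptyset$, which by \cref{lem:MmL:monotone} occurs exactly when $k=|A|$ (then $S=A$ and $\alpha=-\infty$, so $C_\alpha[i]=\emptyset$): here both sides of \cref{eq:condition_reachability} are empty. Otherwise $S'\neq\emptyset$, so $\alpha$ is one of the finite values $e_{ij}$, and for any $a_j$ with $j\in S'$ the definition \cref{eq:k-maxmin-regret} gives
\begin{equation}
\min_{a\in S}\uexpe(a_j-a)\le\max_{a'\in S'}\min_{a\in S}\uexpe(a'-a)=MmL(S,A)=\alpha,
\end{equation}
so there is an $a_i\in S$ with $\uexpe(a_j-a_i)=e_{ij}\le\alpha$, i.e.\ $j\in C_\alpha[i]$, and since $j\notin S$ also $j\in C_\alpha[i]\setminus S$. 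As $j\in S'$ was arbitrary, $S'\subseteq\cup_{i\in S}(C_\alpha[i]\setminus S)$, which gives the equality in \cref{eq:condition_reachability}.

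I do not expect a genuine obstacle here: the argument is essentially the converse reading of \cref{lem:reachability-implies-maximin-upper-bound} and follows by unwinding the definitions. The only points needing a little care are the bookkeeping in the degenerate case $S=A$ (where $\alpha=-\infty$ forces every $C_\alpha[i]$ to be empty) and the preliminary observation, via monotonicity, that the optimal maximin set can be chosen of size exactly $k$, which is what makes the conclusion about $S^+_k(A)$ and the subsequent algorithmic search over candidate values $\alpha=e_{ij}$ meaningful.
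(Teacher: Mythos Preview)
Your proposal is correct and follows essentially the same route as the paper: both deduce $|S^+_k(A)|=k$ from \cref{lem:MmL:monotone} and then verify \cref{eq:condition_reachability} by unwinding the definition of $MmL(S,A)$ into the statement $\forall a'\in S',\,\exists a\in S\colon \uexpe(a'-a)\le\alpha$. Your version is slightly more explicit about the degenerate case $S'=\emptyset$ (where $\alpha=-\infty$ and every $C_\alpha[i]$ is empty), which the paper handles implicitly via vacuous quantification.
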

\begin{proof}
That $|S^+_k(A)|=k$ follows from \cref{lem:MmL:monotone}
and \cref{eq:Maximin_opti_set}.

For brevity of notation,
throughout the rest of the proof,
we write $S$ for $S^+_k(A)$ and $\alpha$ for $MmL^+_k(A)$.
It now suffices to show that
\begin{equation}
\bigcup_{a\in S} C'_{\alpha}(S,a)=S'
\end{equation}
Since each $C'_{\alpha}(S,a)$ is a subset of $S'$, equivalently,
it suffices to show that
\begin{equation}
\forall a'\in S',\,\exists a\in S\colon a'\in C'_{\alpha}(S,a)
\end{equation}
Since $C'_{\alpha}(S,a)=\{a'\in S'\colon \uexpe(a'-a)\le \alpha\}$,
equivalently, we must show that
\begin{equation}
\forall a'\in S',\,\exists a\in S\colon \uexpe(a'-a)\le \alpha
\end{equation}
or equivalently,
\begin{equation}
\max_{a'\in S'}\min_{a\in S}\uexpe(a'-a)\le \alpha
\end{equation}
But this translates to $MmL(S,A)\le\alpha$,
and we know that $MmL(S,A)=\alpha$ by choice of $S$ and $\alpha$,
so the condition is satisfied.
\end{proof}
These last two lemmas tell us that we need to find the lowest $\alpha\in\mathbb{R}\cup\{-\infty\}$ for which there is a set $S$ of size $k$ such that \cref{eq:condition_reachability} is satisfied. While the search can be performed across all values in $\mathbb{R}\cup\{-\infty\}$, we will in practice (see \cref{alg:S^+_k} below) restrict it to the finite set of distinct values $e_{ij}$ (and $-\infty$), as $MmL(S,A)$ can only take one of these values.

Finally, note that \cref{eq:condition_reachability} can be rewritten as follows:
\begin{align}
&
\bigcup_{i \in S} (C_\alpha[i]\setminus S)=A\setminus S
\\
&\iff
\left(\bigcup_{i \in S} C_\alpha[i]\right)\setminus S=A\setminus S
\\
&\iff
\left(\bigcup_{i \in S} C_\alpha[i]\right)\cup S=A
\end{align}

The next question is: what should be a range of $\alpha$ that we are looking for? According to \cref{eq:MmL<=mML}, we know that for $|A|=n$ and $1\le k\le |A|$,
\begin{equation}
MmL^+_{k}(A)\le mML^*_{k}(A),
\end{equation}
where $mML^*_{k}(A)$ can be obtained by \cref{alg:S^*_k}, therefore, $mML^*_{k}(A)$ will be an upper bound of $\alpha$. 
For a lower bound, we notice that if we set $\alpha$ as the $(n-k)^{\text{th}}$ lowest value of all $e_{ij}$, then there are at least $n-k$ values of $e_{ij}$ that are lower or equal to $\alpha$. Thus, the corresponding set $\cup_{i \in S} (C_{\alpha}[i]\setminus S)$ may contain $n-k = |A \setminus S|$ elements for $A \setminus S$, and this is not true for any value lower than that. 

Let us illustrate this notion on our previous example, before formalizing it into an algorithm.

\begin{example}\label{ex:illu_algo}
Consider \cref{ex:MmL2}, together with the value $\alpha=-1$ and $k=2$. The obtained sets $C_{-1.0}[i]$ are pictured in \cref{table:order_value_S3}, with the $\alpha$ value circled, and the values lower than it squared. We do have $C_{-1.0}[3]=\{2,5\}$, $C_{-1.0}[6]=\{4\}$ and $C_{-1.0}[i]=\emptyset$ for all other $i$'s. At best, we have for $S=\{3,6\}$ that $C_{-1.0}[3] \cup C_{-1.0}[6]=\{2,5,4\}$ which is different from $A \setminus S=\{1,2,5,4\}$. We therefore cannot find a set $S$ of two elements such that $MmL(S,A)\leq -1.0$. Note that $-1.0$ is not the 4$^{\text{th}}$ (or $(n-k)^{\text{th}}$) lowest value of all $e_{ij}$, but it is the 3$^{\text{rd}}$ lowest value of all $e_{ij}$. Therefore, there are only three values lower or equal to $-1.0$, and the set $\cup_{i \in S} (C_{-1.0}[i]\setminus S)$ cannot include, by definition, 4 elements for $A \setminus S$ as we want. 

\begin{table}
\centering
\renewcommand{\arraystretch}{1.2}
\begin{tabular}{|c|*{6}{c}|}\hline
\backslashbox{$j$}{$i$}
&\makebox[2em]{1}&\makebox[2em]{2}&\makebox[2em]{3}
&\makebox[2em]{4}&\makebox[2em]{5}&\makebox[2em]{6}\\\hline
	1 & - & 3.0 & 0.0 & 4.0 & 3.0 & $-0.7$\\ 
	2 & 1.3 & - & \myboxed{$-3.0$} & 5.0 & 0.0 & 0.6\\
	3 & 4.6 & 3.3 & - & 8.0 & 3.0 & 3.9 \\
	4 & 3.0 & 6.0 & 3.0 & - & 6.0 & \circled{$-1.0$} \\
	5 & 2.8 & 1.5 & \myboxed{$-1.8$} & 5.6  & - & 2.1\\ 
    6 & 4.0 & 7.0 & 4.0 & 5.0  & 7.0 & - \\ \hline
\end{tabular}
\vspace{0.5em}
\caption{Representation of $C_{-1.0}[i] = \{j\colon e_{ij} \leq -1.0\}$ in \cref{ex:illu_algo}}\label{table:order_value_S3} 
\end{table}

The next value after $-1.0$ in \cref{table:example:MmL2} is $-0.7$, which is the 4$^{\text{th}}$ lowest value of all $e_{ij}$ for which there are $4$ values lower or equal to $-0.7$. Therefore, the corresponding set $\cup_{i \in S} (C_{-0.7}[i]\setminus S)$ will have a potential to be a set of 4 elements for $A \setminus S$. Specifically, the corresponding sets $C_{-0.7}[i]$ are represented in \cref{table:order_value} and are $C_{-0.7}[3]=\{2,5\}$, $C_{-0.7}[6]=\{1,4\}$ and $C_{-1.0}[i]=\emptyset$ for all other $i$'s. This time, we do have for $S=\{3,6\}$ that $C_{-0.7}[3] \cup C_{-0.7}[6]=\{1,2,5,4\}=A \setminus S$, meaning that we can find an $S$ with $MmL(S,A)\leq -0.7$, and since this is the lowest value for which we can, we have $S^+_2=\{3,6\}$.

\begin{table}
\centering
\renewcommand{\arraystretch}{1.2}
\begin{tabular}{|c|*{6}{c}|}\hline
\backslashbox{$j$}{$i$}
&\makebox[2em]{1}&\makebox[2em]{2}&\makebox[2em]{3}
&\makebox[2em]{4}&\makebox[2em]{5}&\makebox[2em]{6}\\\hline
	1 & - & 3.0 & 0.0 & 4.0 & 3.0 & \circled{$-0.7$} \\ 
	2 & 1.3 & - & \myboxed{$-3.0$} & 5.0 & 0.0 & 0.6\\
	3 & 4.6 & 3.3 & - & 8.0 & 3.0 & 3.9 \\
	4 & 3.0 & 6.0 & 3.0 & - & 6.0 & \myboxed{$-1.0$}\\
	5 & 2.8 & 1.5 & \myboxed{$-1.8$} & 5.6  & - & 2.1\\ 
    6 & 4.0 & 7.0 & 4.0 & 5.0  & 7.0 & - \\ \hline
\end{tabular}
\vspace{0.5em}
\caption{Representation of $C_{-0.7}[i]= \{j\colon e_{ij} \leq -0.7\}$ in \cref{ex:illu_algo}}\label{table:order_value}
\end{table}
\end{example}

We are now ready to put these ideas into a corresponding algorithm. \Cref{alg:S^+_k} provides the iterative strategy to find $S^+_k(A)$ and $MmL^+_k(A)$, by finding the lowest value for which the condition of \cref{eq:condition_reachability} is satisfied. \Cref{alg:reachable_alpha} simply checks that this condition can be satisfied. 

\begin{algorithm}
\caption{Finding $S^+_k(A)$}\label{alg:S^+_k}
\begin{algorithmic}[1]
\Require $A = \{a_1,a_2,\dots, a_n\}$, $\credal$, $k$
\Ensure $MmL^+_k(A)$, $S^+_k(A)$
\For{$j = 1\colon n$}
\For{$i = 1\colon n$, $i\neq j$}
\State compute $e_{ij}\coloneqq\uexpe(a_j - a_i)$
\EndFor
\EndFor
\State $S^+_k(A) \gets \emptyset$
\State $m \gets n-k$
\While {$S^+_k(A) = \emptyset$}
\State $e^\prime \gets e_{i^\prime j^\prime }$ such that $e_{i^\prime j^\prime }$ is the $m^{\text{th}}$ lowest value of all $e_{ij}$ 
\For{$i = 1\colon n$}
\State $C[i] \gets$ $C_{e'}[i]:=\{j : e_{ij} \leq e'\}.$
\EndFor
\State $S^+_k(A) \gets$ result of \cref{alg:reachable_alpha} with $C[i]$ and $e'$
\State $m \gets m+1$
\EndWhile
\State $MmL^+_k(A) \gets e'$
\State \Return $S^+_k(A), MmL^+_k(A)$
\end{algorithmic}
\end{algorithm}

\begin{algorithm}
\caption{Reachable value $\alpha$}\label{alg:reachable_alpha}
\begin{algorithmic}[1]
\Require $C_{\alpha}[i]$ for $i=1,\dots,n$, $k$
\Ensure A non-empty subset $S$ if there is a solution, $\emptyset$ if not
\State $S \gets  \emptyset$ 
\For{every subset $T \subseteq \{1,\ldots,n\}$ with $|T|=k$}
\If{$\cup_{i \in T} (C_\alpha[i]\setminus T)=A\setminus T$} 
\State $S \gets  T$ \textbf{break for}
\EndIf
\EndFor
\State \Return $S$
\end{algorithmic}
\end{algorithm}

Looking at the algorithms, it is clear that \cref{alg:reachable_alpha} has a combinatorial nature, and represents a bottleneck in our approach. The next result indicates that this part of the Algorithm is indeed a computational barrier.

\begin{theorem}
Checking whether the condition of \cref{eq:condition_reachability} can be satisfied is NP-complete.
\end{theorem}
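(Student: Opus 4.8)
The plan is to phrase the condition of \cref{eq:condition_reachability} as the decision problem that \cref{alg:reachable_alpha} is designed to answer: the input consists of the subsets $C_\alpha[1],\dots,C_\alpha[n]$ of $A=\{1,\dots,n\}$ together with the integer $k$, and one must decide whether there is a set $S\subseteq A$ with $|S|=k$ satisfying $\bigcup_{i\in S}(C_\alpha[i]\setminus S)=A\setminus S$, equivalently (by the reformulation already derived just above the statement) $\bigl(\bigcup_{i\in S}C_\alpha[i]\bigr)\cup S=A$. Membership in NP is immediate: a candidate $S$ has at most $n$ elements, hence is a certificate of size polynomial in the input, and checking both $|S|=k$ and $\bigl(\bigcup_{i\in S}C_\alpha[i]\bigr)\cup S=A$ takes polynomial time.

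For NP-hardness I would reduce from the \textsc{Dominating Set} problem, which is NP-complete: given an undirected graph $G=(V,E)$ with $V=\{1,\dots,n\}$ and an integer $k$, decide whether some $D\subseteq V$ with $|D|\le k$ satisfies $N[D]=V$. From such an instance, construct the instance of our problem with the same $n$ and the same $k$, and with $C_\alpha[i]\coloneqq\{j\in V\colon\{i,j\}\in E\}$, the open neighbourhood of $i$, for each $i$. Then for any $S$ the set $\bigl(\bigcup_{i\in S}C_\alpha[i]\bigr)\cup S$ is exactly the closed neighbourhood $N[S]$, so a set $S$ of size $k$ witnesses \cref{eq:condition_reachability} in the constructed instance if and only if $S$ is a dominating set of $G$ of size exactly $k$. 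The construction is clearly computable in polynomial time.

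What remains is to bridge the mismatch between ``$|D|\le k$'' in \textsc{Dominating Set} and ``$|S|=k$'' in our problem. Instances with $k\ge n$ are trivially positive for \textsc{Dominating Set} (take $D=V$), so I would map all of them to one fixed yes-instance of our problem; for $k<n$, note that enlarging a dominating set keeps it dominating, so $G$ has a dominating set of size $\le k$ if and only if it has one of size exactly $k$ (pad with arbitrary vertices, which is possible since $k<n=|V|$). Combined with the equivalence of the previous paragraph this gives a correct polynomial-time many-one reduction, so the problem is NP-hard, and together with membership in NP it is NP-complete. I do not expect a genuine obstacle: the only point that needs care is this cardinality bookkeeping (the ``$=k$'' versus ``$\le k$'' issue and the boundary case $k\ge n$), while the rest is a routine identification of \cref{eq:condition_reachability} with (directed) \textsc{Dominating Set}, equivalently a structured special case of \textsc{Set Cover}.
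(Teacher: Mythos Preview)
Your proposal is correct and follows essentially the same route as the paper: both identify the reachability condition with the \textsc{Dominating Set} problem (the paper via a directed graph whose out-neighbourhoods are the $C_\alpha[i]$, you via an undirected graph) and conclude NP-completeness from that equivalence. Your write-up is in fact tidier than the paper's, since you explicitly verify NP membership and handle the ``$|D|\le k$'' versus ``$|S|=k$'' bookkeeping, points the paper leaves implicit.
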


\begin{proof}
To show this, we will show that it is equivalent to solving a dominating set problem, which is a known NP-complete problem. We can rephrase the problem we try to solve as having a set $\left[n\right]=\{1,\ldots,n\}$ of integer indices, to which are associated (possibly empty) subsets $A_i \subseteq \left[n\right]$ such that $i \not \in A_i$. For a given $k \leq n$, we want to solve the following optimisation problem:
$$\text{Find a subset } B \subseteq \left[n\right] \text{ of indices such that } $$
\begin{align}
|B|& =k\\
\cup_{i \in B} A_i &=\{1,\ldots,n\} \setminus B
\end{align}
In the above reformulation, the subset $B$ corresponds to $S$, and the subsets $A_i$ to the subsets $C_\alpha[i]$.
We can then consider the directed graph $G=(V,E)$ where the vertices are $V=\{1,\ldots,n\}$, and where there is an edge $(i,j) \in E$ whenever $j \in A_i$ ($A_i$ are the out-neighbours of $i$). Now, there is straightforward reduction of our problem to a dominating set problem, in the sense that taking $i$ in the dominating set is equivalent to taking $A_i$ in $B \Rightarrow A_i$ ``dominates" $i$ and every index in $A_i$.

So, identifying whether there is a dominating set of size $k$ in $G$ is equivalent to identify whether there is a solution to our problem, hence the two problems have the same complexity. 
\end{proof}

\Cref{fig:graph_illu} illustrates the equivalent graphs of the sets used in \cref{ex:illu_algo}. One can easily see that for $\alpha=-1.0$, there is no way to pick a cover for $k=2$, i.e., two vertices $i,j$ such that all other vertices except $i,j$ are their direct neighbours (one would need to pick at least three vertices). In contrast, this is possible in the graph corresponding to $\alpha=-0.7$.

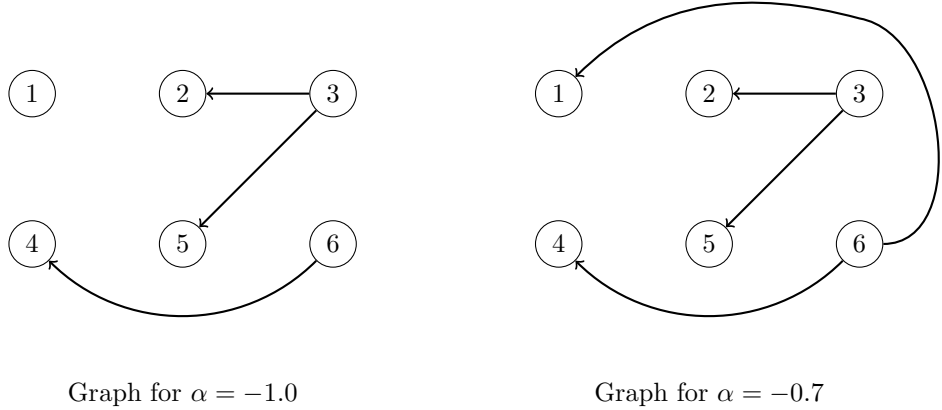
\begin{figure}
\centering
\begin{tikzpicture}
    \node[circle,draw] (1) at (0,0) {1};
    \node[circle,draw] (2) at (2,0) {2};
    \node[circle,draw] (3) at (4,0) {3};
    \node[circle,draw] (4) at (0,-2) {4};
    \node[circle,draw] (5) at (2,-2) {5};
    \node[circle,draw] (6) at (4,-2) {6};
    \draw[->,thick] (3) -- (2);
    \draw[->,thick] (3) -- (5);
    \draw[->,thick] (6) to [bend left=45] (4);
    \node at (2,-4) {Graph for $\alpha=-1.0$};
    \begin{scope}[xshift=7cm]
    \node[circle,draw] (1) at (0,0) {1};
    \node[circle,draw] (2) at (2,0) {2};
    \node[circle,draw] (3) at (4,0) {3};
    \node[circle,draw] (4) at (0,-2) {4};
    \node[circle,draw] (5) at (2,-2) {5};
    \node[circle,draw] (6) at (4,-2) {6};
    \draw[->,thick] (3) -- (2);
    \draw[->,thick] (3) -- (5);
    \draw[->,thick] (6) to [bend left=45] (4);
    \draw[->,thick] (6) to[out=0,in=-10] (4,1) to[out=165,in=45] (1);
    \node at (2,-4) {Graph for $\alpha=-0.7$};
    \end{scope}
\end{tikzpicture}
    \caption{Corresponding graphs of \cref{ex:illu_algo}}
    \label{fig:graph_illu}
\end{figure}

Such a result clearly goes against trying to find the maximin approach rather than the minimax one, at least in terms of computability and with the algorithms we have presented. However, if $k$ remains of low value (which we expect to be the case in most applications of a budgeted rule), the computational burden will remain limited. Also, we will see in the experiments that the maximin approach is very often (but not always) strongly consistent with $D_M$ and the maximality decision rule in practice, which is not the case for the minimax one. Before that, we will show that the maximin rule is also always weakly consistent with $D_M$

\subsection{Weak Consistency of maximin criterion}

Similarly to $S^*_k$, we will show that $S^+_k$ is weakly consistent with maximality. Weak consistency for $A$ such that $MmL^+_k(A)<0$ is ensured by \cref{cor:MmL:consistency:negative}, therefore we now look at the case $MmL^+_k(A) \geq 0$.

\begin{theorem}\label{lem:weak_consist_reg}
For any $S\subseteq A$ with cardinality $k$, there is a $T\subseteq A$ with cardinality $k$ such that $T \cap  D_M(A)\neq\emptyset$ and $MmL(T,A) \leq MmL(S,A)$.
\end{theorem}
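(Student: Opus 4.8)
The plan is to transform an arbitrary $S\subseteq A$ with $|S|=k$ into a suitable $T$ by at most one element swap. First, dispose of the trivial case: if $S\cap D_M(A)\neq\emptyset$, take $T=S$ and there is nothing to prove. So assume $S\cap D_M(A)=\emptyset$. Since $A$ is finite and nonempty, $D_M(A)\neq\emptyset$, hence $D_M(A)\not\subseteq S$, and so by the contrapositive of \cref{cor:MmL:consistency:negative} we have $\alpha\coloneqq MmL(S,A)\ge 0$; moreover $|A|>k$ (otherwise $S=A\supseteq D_M(A)$), so $S'\coloneqq A\setminus S\neq\emptyset$ and $MmL(S,A)$ is genuinely defined by \cref{eq:k-maxmin-regret}.

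Next I would use the standard fact that $\succ$ is a strict partial order on $A$ (irreflexivity is immediate; transitivity follows from superadditivity of $\lexpe$), so that on the finite set $A$ every non-maximal act is dominated by some maximal act. Pick any $a_0\in S$; since $a_0\notin D_M(A)$ there is $a^M\in D_M(A)$ with $a^M\succ a_0$, equivalently $\uexpe(a_0-a^M)<0$. Because $S$ contains no maximal act, $a^M\notin S$ (so $a^M\neq a_0$), and therefore $T\coloneqq(S\setminus\{a_0\})\cup\{a^M\}$ has cardinality $k$ and satisfies $a^M\in T\cap D_M(A)$, so $T\cap D_M(A)\neq\emptyset$.

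It remains to check $MmL(T,A)\le\alpha$. Write $T'\coloneqq A\setminus T=(S'\setminus\{a^M\})\cup\{a_0\}$, and bound $\min_{a\in T}\uexpe(a'-a)$ for each $a'\in T'$. If $a'=a_0$, then $\min_{a\in T}\uexpe(a_0-a)\le\uexpe(a_0-a^M)<0\le\alpha$. If $a'\in S'\setminus\{a^M\}$, then since $a'\in S'$ and $\min_{a\in S}\uexpe(a'-a)\le\alpha$ by definition of $\alpha$, there is $a\in S$ with $\uexpe(a'-a)\le\alpha$: when $a\neq a_0$ we have $a\in T$, so $\min_{a\in T}\uexpe(a'-a)\le\alpha$; when $a=a_0$ we use subadditivity of $\uexpe$ on $a'-a^M=(a'-a_0)+(a_0-a^M)$ to get $\uexpe(a'-a^M)\le\uexpe(a'-a_0)+\uexpe(a_0-a^M)\le\alpha$, whence $\min_{a\in T}\uexpe(a'-a)\le\uexpe(a'-a^M)\le\alpha$ since $a^M\in T$. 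Taking the maximum over $a'\in T'$ yields $MmL(T,A)\le\alpha=MmL(S,A)$, as required.

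The main obstacle is this last sub-case: when the only act of $S$ realizing $\min_{a\in S}\uexpe(a'-a)\le\alpha$ happens to be the act $a_0$ that we removed. That is exactly where we need the triangle-type inequality for $\uexpe$ together with the fact that the new element $a^M$ was chosen to dominate $a_0$ (so $\uexpe(a_0-a^M)\le 0$); everything else is bookkeeping about the sets $T$ and $T'$, plus the reduction of the case $\alpha<0$ to \cref{cor:MmL:consistency:negative}. Applied with $S=S^+_k(A)$, the theorem gives the announced weak consistency of $S^+_k$ and $D^k_{MmL}$ with $D_M$.
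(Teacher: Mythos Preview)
Your proof is correct and follows essentially the same strategy as the paper's: swap a single non-maximal element of $S$ for a maximal element dominating it, then use subadditivity of $\uexpe$ to handle the one problematic case where the removed element was the minimizer. The only minor difference is that the paper removes the specific $a_+$ realizing the inner minimum of $MmL(S,A)$, whereas you observe (correctly) that any $a_0\in S$ works; your treatment of the case $a'=a_0$ is also slightly more direct than the paper's corresponding step.
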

\begin{proof}
If $S \cap D_M(A) \neq \emptyset$ then we can take $T=S$ and we are done.
For the remainder of the proof, we can therefore assume that $S \cap D_M(A)=\emptyset$.
It suffices to construct a subset $T$ of $A$ with cardinality $k$ such that 
$T \cap  D_M(A)\neq\emptyset$  and $MmL(T,A) \leq MmL(S,A)$, establishing the desired result.

For brevity, define $S'\coloneqq A\setminus S$. If $S \cap D_M(A)=\emptyset$, then $MmL(S,A) \ge 0$ by \cref{thm:MmL:negative}. There are $a_+ \in S$ and $a'_+ \in S'$ such that 
\begin{align}
0 \le MmL(S,A)=\uexpe(a'_+-a_+)
\end{align}
Since $S \cap D_M(A) = \emptyset$, and $a_+\in S$,
we know that $a_+$ is non-maximal in $A$.
Therefore, $a_+$ must be dominated by a maximal element of $A$, which is by assumption guaranteed to belong to $S'$.
In other words, there is a $b \in S'\cap D_M(A)$ such that
\begin{align}\label{eq:b_dominating_aplus}
\uexpe(a_+ - b) <0
\end{align}

Define $T\coloneqq (S\setminus \{a_+\}) \cup \{b\}$.
Clearly, $T$ has the same cardinality as $S$, since $a_+\in S$ and $b\not\in S$ by construction,
and $T\cap D_M(A)\neq\emptyset$ since $b\in D_M(A)$.
Since $MmL(S,A)=\uexpe(a'_+-a_+)$, also by construction,
it suffices to show that
\begin{equation}
MmL(T,A)=\max_{a'\in T'}\min_{a\in T}\uexpe(a'-a)
\le
\uexpe(a'_+-a_+)
\end{equation}
to finish our proof. In other words,
we are left to prove that
\begin{align}
\forall a'\in T',\,\exists a\in T\colon \uexpe(a' - a) \leq \uexpe(a'_+-a_+).
\end{align}

Indeed, by the definition of $MmL(S,A)$ we already know that
\begin{align}
\label{eq:a_j+i+_greater}
\forall a' \in S', \exists a \in S, &\ \uexpe(a'-a) \leq \uexpe(a'_+-a_+)
\end{align}
First note that we have $\lexpe(b-a_+)>0$, due to \cref{eq:b_dominating_aplus} and the duality relation $\lexpe(f)=-\uexpe(-f)$. From this we can deduce
\begin{equation}
\uexpe(b-a)
\ge 
\lexpe(b-a_+)+\uexpe(a_+-a)
>
\uexpe(a_+-a)
\end{equation}
so when $a'=b$ in \cref{eq:a_j+i+_greater},
we can replace $a'$ with $a_+$ whilst still
respecting the inequality. 
Therefore,
\begin{align}
\forall a'\in T',\,\exists a\in S\colon \uexpe(a' - a) \leq \uexpe(a'_+-a_+).
\end{align}

Now fix any $a'\in T'$.
If the $a\in S$ in the above condition is not equal to $a_+$, then $a\in T$ and we are done.
Otherwise, if $a=a_+$, note that again we can replace it with $b$ whilst still respecting the inequality, because, since $\uexpe(a_+-b)>0$,
\begin{equation}
\uexpe(a' - b)
\le
\uexpe(a' - a_+)+\uexpe(a_+ - b)
<
\uexpe(a' - a_+)
\end{equation}
This concludes the proof.
\end{proof}

\Cref{lem:weak_consist_reg} tells us that whenever we have a set of $k$ elements without any maximal ones, we can add a maximal ones and improve the solution by decreasing its $MmL$ value. This is sufficient to prove that at least one set $S^+_k(A)$ contains a maximal element. Consequently, we have the following corollaries from \cref{lem:weak_consist_reg} and \cref{cor:MmL:consistency:negative}. 

\begin{corollary}\label{cor:S_^+k_weekly}
$S^+_k$ and $D^k_{MmL}$ are weakly consistent with $D_M$. 
\end{corollary}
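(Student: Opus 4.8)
The plan is to split into three cases according to the definition of $D^k_{MmL}$ in \cref{eq:MmL}, and in each case exhibit an element of $D_M(A)$ lying in the returned set.

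First I would dispatch the two easy cases. If $|A|\le k$, then by \cref{lem:MmL:monotone} and \cref{eq:Maximin_opti_set} we have $S^+_k(A)=A$, so $S^+_k(A)\cap D_M(A)=D_M(A)$, which is non-empty since $A$ is a finite non-empty set of acts; and $D^k_{MmL}(A)=D_M(A)$ in this case, so it too meets $D_M(A)$. If instead $|A|>k$ but $MmL^+_k(A)<0$, then \cref{cor:MmL:consistency:negative} gives $D_M(A)\subseteq S^+_k(A)$, hence $S^+_k(A)\cap D_M(A)=D_M(A)\neq\emptyset$, and again $D^k_{MmL}(A)=D_M(A)$ by definition.

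The remaining case is $|A|>k$ and $MmL^+_k(A)\ge 0$, where $D^k_{MmL}(A)=S^+_k(A)$ and, by \cref{lem:MmL:monotone}, $|S^+_k(A)|=k$. Here I would apply \cref{lem:weak_consist_reg} with $S\coloneqq S^+_k(A)$: this produces a set $T\subseteq A$ with $|T|=k$, with $T\cap D_M(A)\neq\emptyset$, and with $MmL(T,A)\le MmL(S^+_k(A),A)=MmL^+_k(A)$. Since $T$ has cardinality $k\le k$ and $MmL^+_k(A)$ is by \cref{eq:Maximin_opti_set,eq:Maximin_opti_set_value} the minimum of $MmL(\cdot,A)$ over all non-empty subsets of size at most $k$, we also have $MmL(T,A)\ge MmL^+_k(A)$, whence $MmL(T,A)=MmL^+_k(A)$. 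Thus $T$ itself attains the minimum and is therefore an admissible value of $\arg\min$ in \cref{eq:Maximin_opti_set}; choosing this $T$ as $S^+_k(A)$ (recall ties are broken arbitrarily) yields an optimal set containing a maximal element. Consequently $S^+_k$ can be taken weakly consistent with $D_M$, and since $D^k_{MmL}(A)=S^+_k(A)$ in this case, $D^k_{MmL}$ is weakly consistent as well.

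The only delicate point is the last case, and specifically the interplay between \cref{lem:weak_consist_reg} and the non-uniqueness of $S^+_k(A)$: one must observe that the inequality $MmL(T,A)\le MmL^+_k(A)$ combined with optimality forces equality, so that the improved set $T$ is itself a legitimate selection of $S^+_k(A)$. With that observation the corollary is immediate from \cref{lem:weak_consist_reg} and \cref{cor:MmL:consistency:negative}; no further computation is needed.
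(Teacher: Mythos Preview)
Your proposal is correct and follows exactly the route the paper indicates: the paper states only that the corollary follows from \cref{lem:weak_consist_reg} and \cref{cor:MmL:consistency:negative}, and you have filled in precisely those details via the natural three-way case split. Your explicit treatment of the non-uniqueness of $S^+_k(A)$---observing that \cref{lem:weak_consist_reg} yields a $T$ that attains the minimum and can therefore be selected as $S^+_k(A)$---is the right way to close the argument, and matches the paper's informal remark that ``at least one set $S^+_k(A)$ contains a maximal element.''
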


\begin{corollary}\label{cor:S^+_k_strongly}
$S^+_1$ and $D^1_{MmL}$ are strongly consistent with $D_M$. 
\end{corollary}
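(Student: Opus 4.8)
The plan is to obtain strong consistency for $k=1$ as an immediate consequence of the weak consistency already proved in \cref{cor:S_^+k_weekly}, using the elementary observation that a set of cardinality one which meets $D_M(A)$ is automatically contained in $D_M(A)$.

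Concretely, I would first note that the feasible sets in \cref{eq:Maximin_opti_set} for $k=1$ are exactly the singletons of $A$, so $S^+_1(A)$ is always a non-empty set with $|S^+_1(A)|=1$. Applying \cref{cor:S_^+k_weekly} with $k=1$ gives $S^+_1(A)\cap D_M(A)\neq\emptyset$; since $S^+_1(A)$ has a single element, this forces $S^+_1(A)\subseteq D_M(A)$, i.e.\ $S^+_1$ is strongly consistent. For $D^1_{MmL}$ I would then check the two branches of \cref{eq:MmL}: if $MmL^+_1(A)<0$ or $|A|\le 1$ then $D^1_{MmL}(A)=D_M(A)\subseteq D_M(A)$ trivially (and \cref{cor:MmL:consistency:negative} guarantees $D_M(A)=S^+_1(A)$ in the first of these sub-cases, so the budget is respected), while otherwise $D^1_{MmL}(A)=S^+_1(A)\subseteq D_M(A)$ by what was just shown. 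Hence $D^1_{MmL}(A)\subseteq D_M(A)$ in all cases.

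There is no genuine obstacle here; the only subtlety worth a word is that $S^+_1(A)$ is only determined up to ties among the minimisers of $MmL(\cdot,A)$. This is harmless because \cref{cor:S_^+k_weekly} is a statement about whichever optimal singleton the rule returns, and for singletons the properties ``intersects $D_M(A)$'' and ``is contained in $D_M(A)$'' coincide. If one prefers to avoid even this reliance, one can specialise the construction in the proof of \cref{lem:weak_consist_reg} to $k=1$: assuming the returned $S^+_1(A)=\{a^*\}$ contains no maximal act, pick a maximal $b$ dominating $a^*$ (so $\uexpe(a^*-b)<0$), and observe that for $k=1$ every ``witness'' act used in that proof is forced to be $a^*$ itself and hence gets replaced by $b$, yielding the \emph{strict} inequality $MmL(\{b\},A)<MmL(\{a^*\},A)$, which contradicts optimality of $S^+_1(A)$; thus $a^*$ must be maximal. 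A third route is simply to invoke \cref{thm:S^+=S^*}, which identifies $S^+_1(A)$ with $S^*_1(A)$ and reduces the claim to \cref{cor:S_k_strongly}.
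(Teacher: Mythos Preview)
Your argument is correct and matches the paper's intended reasoning: the paper states this corollary without proof, presenting it as an immediate consequence of \cref{lem:weak_consist_reg} (via \cref{cor:S_^+k_weekly}) and \cref{cor:MmL:consistency:negative}, which is exactly the route you take. The additional alternative routes you sketch (specialising \cref{lem:weak_consist_reg} to $k=1$, or invoking \cref{thm:S^+=S^*} to reduce to \cref{cor:S_k_strongly}) are also valid but unnecessary.
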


After investigating the properties of our two budgeted regret-based rules, as well as the computational methods to obtain them, we are ready to proceed to some experiments to test some of their behaviours. 

\section{Computational experimentation}\label{sec:computation}
In this section, we will perform some experiments to compare $S^*_k$ and $S^+_k$ with their greedy approximations $S^{g*}_k$ and $S^{g+}_k$, obtained as follows: given a set of actions $A$, we compute $S^{g*}_1(A) = S^*_1(A)$. Then, for $k \geq 2$, we compute $S^{g*}_k(A) =  S^{g*}_{k-1}(A) \cup S^*_1(A\setminus S^{g*}_{k-1}(A))$ and determine $mML(S^{g*}_k,A)$.  Similarly, we compute $S^{g+}_1(A) = S^+_1(A)$ and determine $MmL(S^{g+}_k,A)$. Then, for $k \geq 2$, we can compute $S^{g+}_k(A) =  S^{g+}_{k-1}(A) \cup S^+_1(A\setminus S^{g+}_{k-1}(A))$ and determine $mML(S^{g+}_k,A)$. Recall that the greedy algorithms are not optimal and will usually not result in $S^*_k$ and $S^+_k$, as already shown in \cref{ex:mML,ex:MmL2}.

We aim to compare $S^*_k$ and $S^+_k$ with their greedy approach for consistency with respect to maximality. Basically, we want to 
find out how much $S^*_k$, $S^{g*}_k$, $S^+_k$ and $S^{g+}_k$ can capture maximal elements in the set $D_M$. This can measure the quality of the greedy approximations with our proposed algorithms to find $S^*_k$ and $S^+_k$. Since $S^*_1$ and $S^+_1$ are weakly and strongly consistent with $D_M$ by \cref{cor:S_k_weekly,cor:S_k_strongly}  and \cref{cor:S_^+k_weekly,cor:S^+_k_strongly} respectively, we will not consider case $k = 1$. 

We fix $|A| = 20$, $|\Omega| = 5$, $|\credal|$ =20. We generate $p \in \credal$ by sampling a probability mass function $p$ uniformly from the unit simplex as follows. For each $\omega$, we sample $q(\omega)$ uniformly from (0,1) and then for each $\omega$, we assign $p(\omega) \coloneqq \frac{\ln q(\omega)}{\sum_{\omega}\ln q(\omega)}$. This ensures that generated distributions $p(\omega_1), \dots , p(\omega_n)$ follow a Dirichlet $\mathrm{Dir}(1, \dots, 1)$ distribution which has uniform density over the unit simplex \cite{feng2010Dirichlet}. Note that the convex hull of $\credal$ may have less than twenty extreme points, as some generated probabilities may be convex combinations of the others. Next, 
we generate a set of elements $A$ on $\Omega$ for which $|D_M(A)| = 6$ by using algorithm 6 in \cite{2019:Nak:Troff:Cai}. We will compute $S^*_k$, $S^{g*}_k$, $S^+_k$ and $S^{g+}_k$ for the cases  
$k \in \{2, \dots, 6\}$. To do so, for all $a_i \neq a_j \in A$, we compute $\uexpe(a_j - a_i)$ with respect to the credal set $\credal$ as an input to our algorithms and compute $S^*_k$, $S^{g*}_k$, $S^+_k$ and $S^{g+}_k$ with respect to $A$. Next, we check whether $S^*_k$, $S^{g*}_k$, $S^+_k$ and $S^{g+}_k$ are weakly consistent (having only one maximal element in the set) or strongly consistent (having all elements in the set being maximal) with $D_M$ or not. We also compute the proportion of elements in $S^*_k$, $S^{g*}_k$, $S^+_k$ and $S^{g+}_k$ that are in $D_M$. We repeat this process 500 times and summarise the result in \cref{table:1}. The percentages of these sets
that satisfy weakly and strongly consistent properties and the average percentages of elements in these sets which are in $D_M$ are showed in the 3$^\text{rd}$-5$^\text{th}$ columns of \cref{table:1}.

\begin{table}
\centering
\begin{tabular}{|c|c|c|c|c|c|c|} \hline 
$D^k$ & $k$ & w.c. & s.c. & $\dfrac{|D^k \cap D_M|}{ |D^k|}$ & & \\ \hline
\multirow{5}{*}{$S^*_k$} & 2 & 100\% & 100\% & 100\% &\multirow{5}{*}{$S^*_k=S^{g*}_k$} & \multirow{5}{*}{$\dfrac{|S^{g*}_k \cap S^*_k|}{ |S^{g*}_k|}$} \\ \cline{2-5} 
& 3 & 100\% & 90.2\% & 96.7\% & &  \\ \cline{2-5} 
& 4  & 100\%  & 73.8\% & 92.7\% & &  \\ \cline{2-5} 
& 5 & 100\% & 57.0\%  & 89.8\% & &  \\\cline{2-5} 
& 6  & 100\%  & 44.0\% & 88.4\% & &  \\ \hline
\multirow{5}{*}{$S^{g*}_k$} 
& 2 & 100\%  & 94.8\% & 97.4\% & 32.2\% & 64.3\% \\ \cline{2-7} 
& 3 &  100\% & 78.6\% &92.5\%& 13.0\% & 61.4\%  \\\cline{2-7} 
& 4 & 100\%  & 58.2\% & 87.8\%& 9.2\%& 65.9\% \\ \cline{2-7} 
& 5 & 100\%  & 37.0\% & 82.9\% & 7.0\% &69.4\% \\\cline{2-7} 
& 6 & 100\%  & 15.8\% & 77.9\% & 9.4\% & 73.6\% \\ \hline  
\multirow{5}{*}{$S^+_k$} & 2 & 100\% & 99.2\%& 99.6\% &\multirow{5}{*}{$S^+_k=S^{g+}_k$} & \multirow{5}{*}{$\dfrac{|S^{g+}_k \cap S^+_k|}{ |S^{g+}_k|}$} \\ \cline{2-5} 
& 3 & 100\% & 99.8\% & 99.9\% & &  \\ \cline{2-5} 
& 4  & 100\%  & 100\% & 100\% & &  \\ \cline{2-5} 
& 5 & 100\% & 100\%  & 100\% & &  \\\cline{2-5} 
& 6  & 100\%  & 100\% & 100\% & &  \\ \hline
\multirow{5}{*}{$S^{g+}_k$} 
& 2 & 100\%  & 94.8\% & 97.4\%  & 32.6\% & 63.0\%  \\ \cline{2-7} 
& 3 &  100\% & 78.6\% & 92.5\%& 18.6\% & 65.8\%  \\\cline{2-7} 
& 4 & 100\%  & 58.2\% & 87.8\%& 16.4\%& 71.1\% \\ \cline{2-7} 
& 5 & 100\%  & 37.0\% & 82.9\% & 15.2\% & 74.9\% \\\cline{2-7} 
& 6 & 100\%  & 15.8\% & 77.9\% & 15.8\% & 77.9\% \\ \hline 
\end{tabular}
\vspace{0.5em}
\caption{Percentages averages of $S^*_k$, $S^{g*}_k$, $S^+_k$ and $S^{g+}_k$ that satisfy different conditions.}\label{table:1}
\end{table}

According to the results, we see that $S^*_k$ and $S^+_k$ and their greedy approach are weakly consistent with $D_M$. Interestingly, only $S^+_k$ is likely to be strongly consistent while the rest of the sets are rarely strongly consistent with $D_M$ (the numbers in the fourth column quickly drop for all sets as $k$ increase, while they actually increase for $S^+_k$). Moreover, the average percentages of maximal elements in $S^*_k$ and $S^+_k$ are higher than in their greedy approximations $S^{g*}_k$ and $S^{g+}_k$. 

By \cref{thm:S^+=S^*} and the procedure of construct $S^{g*}_k$ and $S^{g+}_k$, we have $S^{g*}_k = S^{g+}_k$ for all $k$. Therefore, the percentages of $S^{g*}_k$ and $S^{g+}_k$ that are weakly and strongly consistent with $D_M$ are equal. In addition, to see how close those greedy approximations are to $S^*_k$ and $S^+_k$, we compare the optimal solutions with the greedy approach solutions. To do so, we record the number of $S^*_k = S^{g*}_k$ and the number of $S^+_k = S^{g+}_k$ and present the averages percentages of these sets that satisfy these conditions in the column 6$^\text{th}$. In the column 7$^\text{th}$, we calculate the average of proportion of elements in $S^{g*}_k$ that are in $S^*_k$ and and the proportion of elements in $S^{g+}_k$ that are in $S^+_k$. 

As we can employ $mML$ and $MmL$ as budgeted decision rules as in \cref{eq:mML,eq:MmL}, we want to find out how fast  $S^*_k$ and $S^+_k$ will become supersets of $D_M$ (or $mML^*_k(A)<0$ and $MmL^+_k(A)<0$) so that we can simply return $D_M$ instead of $S^*_k$ or $S^+_k$. To do so, we regenerate a set of elements $A$ on $\Omega$ such that $|D_M|=m$ for $m \in \{2,5,10\}$. Next, we compute $mML^*_k(A)$ and $MmL^+_k(A)$ for $k = m + i$, where $i \in \{0,1,2,3\}$. We repeat this process 100 times and present the result in \cref{table:2}. The average percentages of $mML^*_k(A)<0$ and $MmL^+_k(A)<0$ are presented in the 2$^\text{nd}$ and 3$^\text{rd}$ columns while the average percentages of $mML^*_k(A) = MmL^+_k(A)$ is presented in the 4$^\text{th}$ column of \cref{table:2}. According to the result, we found that $S^+_k$ becomes a superset of $D_M$ much faster than $S^*_k$ as the average percentages of $MmL^+_k(A)<0$ are much higher than the average percentages of $mML^*_k(A) <0$. If the cardinality of $D_M$ is increasing, then the average percentages of $mML^*_k(A)= MmL^+_k(A)$ tend to be decreasing. 

\begin{table}
\centering
\begin{tabular}{|c|c|c|c|c|}\hline
$|D_M|$ & $k$ & $mML^*_k(A)<0$ & $MmL^+_k(A)<0$ & $mML= MmL$ \\ \hline
\multirow{4}{*}{2} & 2 & 54\% & 100\% & 46\% \\ \cline{2-5} 
 & 3 & 76\% & 100\% & 51\% \\ \cline{2-5} 
& 4 & 89\% & 100\%  & 51\% \\ \cline{2-5} 
 & 5 & 96\% & 100\%  & 54\% \\ \hline
\multirow{4}{*}{5} & 5 & 33\% & 100\% & 23\%                 \\ \cline{2-5} 
& 6 & 64\% & 100\%  & 25\% \\ \cline{2-5} 
& 7 & 85\% & 100\%  & 31\% \\ \cline{2-5} 
& 8 & 96\% & 100\%  & 34\% \\ \hline
\multirow{4}{*}{10} & 10 & 28\% & 100\% & 15\%                 \\ \cline{2-5} 
& 11 & 66\% & 100\%  & 22\%\\ \cline{2-5} 
& 12 & 84\% & 100\%  & 19\% \\ \cline{2-5} 
& 13 & 97\% & 100\%  & 24\% \\ \hline
\end{tabular}
\vspace{0.5em}
\caption{Percentages averages of $S^*_k$ and $S^+_k$ that satisfy different conditions.}\label{table:2}
\end{table}

All those numbers show that, in practice, the maximin approach has a quite stronger consistency with maximality, and its negativity can be used as a quite reliable signal that we have captured all the maximal elements. In contrast, the minimax rule shows a much weaker consistency, and will often contain non-maximal elements. As maximality rests on very strong theoretical foundations, our conclusion is that the maximin rule should be preferred whenever its computational burden remains affordable, and that one should resort to the minimax rule only when computational efficiency is a key issue.

\section{Two illustrative use cases}
\label{sec:illustrations}

In this section, we demonstrate how our method can be applied in practice. The first example is inspired from \citet{jansen2022quantifying}, but adapted to provide more than $3$ maximal acts, while the second one concerns a situation where we must predict binary vectors over a set of labels, which is the situation encountered in multi-label learning, a specific multi-task machine learning problem. 

\subsection{Financial investment example}
We follow the financial application example in \cite{jansen2022quantifying}, where a subject wants to invest her money in some stocks. An act corresponds to investing in a specific stock. Suppose that a financial agent offers her ten different stocks, so we have $A=\{a_1, \dots, a_{10}\}$. There are five possible states of nature $\Omega=\{\omega_1, \dots, \omega_5\}$ corresponding to different economic scenarios that are uncertain to the agent.
The payoff for each stock under each possible scenarios is given in \cref{tab:financ_acts}.

\begin{table}
\centering
\begin{tabular}{c|ccccc}
 & $\omega_1$ & $\omega_2$ & $\omega_3$ & $\omega_4$ & $\omega_5$ \\ \hline
$a_1$ & 37 & 25 & 23 & 73 & 91 \\
$a_2$ & 50 & 67 &  2 & 44 & 94 \\
$a_3$ & 60& 4& 96& 1& 83 \\
$a_4$ & 16& 24& 31& 26& 100 \\
$a_5$ & 3&  86&  76&  85&  11\\ 
$a_6$ & 12&  49&  66&  56&  14 \\ 
$a_7$ & 39&  10&  92&  88&  57 \\ 
$a_8$ & 62&  52&  80&  71&  42 \\ 
$a_9$ & 90&  8&  74&  70&  38 \\ 
$a_{10}$ & 63&  68&  36& 69 &  9
\end{tabular}
\vspace{0.5em}
\caption{Payoffs for the acts of the financial application example from \cite{jansen2022quantifying}.}\label{tab:financ_acts}
\end{table}

In addition, based on the decision maker's experience in the financial market, she specifies her credal set through probability bounds as follows
(note these are slightly wider than the ones from \citet{jansen2022quantifying} to better demonstrate the benefits of our method):
\begin{equation}
\begin{aligned}
\credal = \{p \in \mathbb{P} \colon
& 0.1 \leq p(\omega_1) \leq 0.3, 0.05 \leq p(\omega_2) \leq 0.2, \\
& 0.1 \leq  p(\omega_3) \leq 0.2, 0.2 \leq  p(\omega_4) \leq 0.4, 0.1 \leq  p(\omega_5) \leq 0.3\}.
\end{aligned}
\end{equation}
From these bounds, the values of $\uexpe(a_j - a_i)$, for all $j \neq i$, can be calculated by linear programming. They are provided in \cref{table:example:finance}. 

\begin{table}
\centering
{\setlength{\tabcolsep}{2pt}  
\addtolength{\tabcolsep}{2pt}
\begin{tabular}{|c|*{10}{r}|}\hline
\backslashbox{$j$}{$i$}
&\makebox[1.5em]{1}&\makebox[1.52em]{2}&\makebox[1.5em]{3}
&\makebox[1.5em]{4}&\makebox[2em]{5}&\makebox[1.5em]{6} &\makebox[1.5em]{7} &\makebox[1.5em]{8} &\makebox[1.5em]{9} &\makebox[1.5em]{10}\\\hline
1 & - & 11.65 & 25.0 & 23.5 & 22.85 & 29.35 & 2.9 & 4.7 & 9.8 & 19.5 \\
  2 &5.0 & - & 21.6 & 20.7 & 21.5 & 28.4 & 6.9 & 3.0 & 9.6 & 14.3 \\
  3 &4.05 & 7.3 & - & 15.95 & 20.8 & 26.35 & $-3.0$ & $-1.4$ & 0.3 & 16.65 \\
  4 & $-7.4$ & $-4.25 $& 6.5 & - & 8.95 & 15.0 & $-7.4$ & $-11.2$ & $-3.0$ & 4.35 \\
  5 &16.2 & 22.0 & 33.1 & 34.8 & - & 19.8 & 2.6 & 2.6 & 10.6 & 12.2 \\
  6 &$-3.6$ & 2.2 & 13.3 & 15.0 & $-5.55$ & - & $-16.2$ & $-17.2$ & $-9.2$ & $-3.1$ \\
  7 &16.15 & 26.3 & 30.5 & 37.75 & 23.8 & 30.4 & - & 8.7 & 10.6 & 26.95 \\
  8 &19.0 & 23.45 & 32.3 & 34.95 & 23.1 & 28.7 & 8.0 & - & 8.6 & 18.5 \\
  9 &18.9 & 26.95 & 30.3 & 39.25 & 27.0 & 32.85 & 5.8 & 4.6 & - & 20.25 \\
  10 &10.0 & 11.6 & 27.2 & 27.2 & 8.5 & 19.5 & 2.7 & $-4.8$ & $-0.5$ & - \\ \hline
\end{tabular}
}
\vspace{0.5em}
\caption{Values of $\uexpe(a_j - a_i)$.}\label{table:example:finance}
\end{table}	

We see that while our information can discard some items as being non-maximal, we still have $\{a_1,a_2,a_5,a_7, a_8, a_9\}$ for the set of maximal acts, which may be judged too high if these represent complex financial portfolios. Results of our algorithms are shown in \cref{table:resutls_S*_S+}. In this case, the two approaches only differ slightly, and are completely consistent with the notion of maximality, as all selected examples for $k\leq 6$ are maximal. 

\begin{table}
\centering
{
\begin{tabular}{|l|l|c|l|c|}\hline
$k$ & $S^*_k(A)$ & $mML^*_k(A)$ &  $S^+_k(A)$ & $MmL^+_k(A)$ \\ \hline
1 & $a_7$ & 8 & $a_7$ &  8  \\ \hline
2 & $a_7, a_8$ & 4.7 & $a_7, a_8$ & 4.6 \\ \hline
3 & $a_1, a_7, a_8$ & 4.6 &  $a_7, a_8, a_9$ & 3 \\ \hline
4 &  $a_2, a_7, a_8, a_9$ & 2.9 & $a_2, a_7, a_8, a_9$  & 2.9\\ \hline
5 & $a_1, a_2, a_7, a_8, a_9$ & 2.6 & $a_1, a_2, a_7, a_8, a_9$ & 2.6 \\ \hline
6 & $a_1, a_2, a_5, a_7, a_8, a_9$ & $-1.4$ &  $a_1, a_2, a_5, a_7, a_8, a_9$ & $-3.0$ \\  \hline
\end{tabular}
}
\vspace{0.5em}
\caption{$S^*_k$ and $S^+_k$ for different values of $k$.}\label{table:resutls_S*_S+}
\end{table}

\subsection{Multi-label example}

To demonstrate our approach in machine learning, we consider multi-label classification~\cite{liu2021emerging}, a sub-category of multi-task learning where one first observes an input $x$ and then has to predict binary vectors over a set of labels $\bm{\ell}\coloneqq(\ell_1,\dots,\ell_n)$ with $\ell_i \in \{0,1\}$. A value $\ell_i=1$ usually means that the label is present in the instance $x$, while a zero means that the label is absent. There are many learning schemes to solve this problem, including imprecise ones~\cite{antonucci2017multilabel,moral2022partial,alarcon2021multi}, as well as many commonly used loss functions.  

\begin{table}
\begin{displaymath}
\begin{array}{c|cccccccc}
   &  [0 0 0] &   [0 0 1] &   [0 1 0] &   [0 1 1] &   [1 0 0] &   [1 0 1] &   [1 1 0] &   [1 1 1] \\
\hline
      \left[0 0 0\right]   & 0 &        -1 &        -1 &        -2 &        -1 &        -2 &        -2 &        -3 \\
      \left[0 0 1\right]   & -1 &         0 &        -2 &        -1 &        -2 &        -1 &        -3 &        -2 \\
      \left[0 1 0\right]   & -1 &        -2 &         0 &        -1 &        -2 &        -3 &        -1 &        -2 \\
       \left[0 1 1\right]  & -2 &        -1 &        -1 &         0 &        -3 &        -2 &        -2 &        -1 \\
      \left[1 0 0\right]   & -1 &        -2 &        -2 &        -3 &         0 &        -1 &        -1 &        -2 \\
      \left[1 0 1\right]   & -2 &        -1 &        -3 &        -2 &        -1 &         0 &        -2 &        -1 \\
      \left[1 1 0\right]   & -2 &        -3 &        -1 &        -2 &        -1 &        -2 &         0 &        -1 \\
     \left[1 1 1\right]    & -3 &        -2 &        -2 &        -1 &        -2 &        -1 &        -1 &         0 \\
\hline
\end{array}
\end{displaymath}
\caption{Example of multi-label utility based on Hamming distance for $n=3$.}
\label{tab:multi_label_utility}
\end{table}

As is classically done in machine learning, our sets of acts and of states will coincide, acts being prediction of the possible ground truth when observing $x$. We will therefore have $\Omega=\mathcal{A}=\{0,1\}^n$, with an exponentially increasing size of sets of alternatives as a function of the number of labels $n$. We will also consider here the utility version of the standard Hamming loss function, which means that if $\bm{\hat{\ell}}$ is the predicted vector (i.e., the act, in our setting), and $\bm{\ell}$ the true one (i.e., the state, in our setting), then 
$$\bm{\hat{\ell}}(\bm{\ell})=-\sum_{i=0}^n \mathbb{I}_{\hat{\ell}_i \neq \ell_i},$$
where $\mathbb{I}_A$ denotes the indicator function of event $A$. \Cref{tab:multi_label_utility} illustrates the obtained utility matrix when $n=3$. As a model, we will consider in this example an imprecise version of the classical binary relevance scheme, where the probability mass of a given vector $\ell$ is the product of label-wise probability masses, i.e.,
\begin{equation}\label{eq:joint_multi_label}p(\bm{\ell})=\prod_{i=1}^n p_i(\ell_i)\end{equation}
Here, the $p_i$'s values are outputted by classifiers. We will consider here that instead of having precise label-wise estimates, we have imprecise ones given as follows:
$$p_1(\ell_1=1) \in [0.4,0.8], \quad p_2(\ell_2=1) \in [0.2,0.6], \quad p_3(\ell_3=1) \in [0.1,0.7].$$
By robustifying the product in \cref{eq:joint_multi_label}, we get a set of extreme probabilities obtained by considering all the combinations of interval bounds. Those are summarised in \cref{tab:ext_prob_mult_lab}. 

\begin{table}\small
\begin{displaymath}
    \begin{array}{c|cccccccc}
   & p([0 0 0]) &   p([0 0 1]) &   p([0 1 0]) &   p([0 1 1]) &   p([1 0 0]) &   p([1 0 1]) &   p([1 1 0]) &   p([1 1 1]) \\
\hline
   p_1 & 0.432 &     0.048 &     0.108 &     0.012 &     0.288 &     0.032 &     0.072 &     0.008 \\
   p_2 & 0.144 &     0.336 &     0.036 &     0.084 &     0.096 &     0.224 &      0.024 &     0.056 \\
   p_3 & 0.216 &     0.024 &     0.324 &     0.036 &     0.144 &     0.016 &     0.216 &     0.024 \\
   p_4 & 0.072 &     0.168 &     0.108 &     0.252 &     0.048 &     0.112 &     0.072 &     0.168 \\
   p_5 & 0.144 &     0.016 &     0.036 &     0.004 &     0.576 &     0.064 &     0.144 &     0.016 \\
   p_6 & 0.048 &     0.112 &     0.012 &     0.028 &     0.192 &     0.448 &     0.048 &     0.112 \\
   p_7 & 0.072 &     0.008 &     0.108 &     0.012 &     0.288 &     0.032 &     0.432 &     0.048 \\
   p_8 & 0.024 &     0.056 &     0.036 &     0.084 &     0.096 &     0.224 &     0.144 &     0.336 \\
\hline
\end{array}
\end{displaymath}
    \caption{Extreme probabilities of multi-label example.}
    \label{tab:ext_prob_mult_lab}
\end{table}

All is left to do now is to compute the pairwise matrix of upper expectations, which is summarised in \cref{tab:upper_mult_lab}. From this table it is clear that all alternatives are maximal, hence using a robust decision rule such as maximality is not helpful at all to select an alternative. In general, we would expect the number of maximal vectors to be quite large but not equal to $\Omega$. Also note that due to the simplicity of the model and the highly structured form of the utility matrix in \cref{tab:multi_label_utility}, upper expectations only take a limited number of values. This would however not be the case in actual applications, where an analyst would probably consider more complex models, as well as utility or loss functions that would be different for different label mistakes. 

Assuming that we want to return only two vectors, let us first consider the minimax criterion. Applying \cref{alg:S^*_k} to \cref{tab:upper_mult_lab}, we get that the minimal second largest element is obtained for column $[100]$ (the only one for which it is below 1), with $S^*_2=\{[100],[011]\}$ and $mML(S^*_2)=0.6$. Considering now the maximin criterion, if we fix $\alpha=0.4$ (from the $mML$ value, we already know it is equal or lower than 0.6), we get $C_{0.4}([100])=\{[000],[010],[101],[110]\}$ and $C_{0.4}([101])=\{[001],[011],[111]\}$, that do provide a dominating set, and one can check $S^+_2=\{[100],[101]\}$ and $MmL(S^+_2)=0.4$. Despite the high structure of the problem, the two approaches deliver distinct, unique solutions. They also seem to adopt different strategies, the minimax recommending very diverse, complementary vectors, while the maximin sends back two similar vectors, that differ only by the label that is the most uncertain ($\ell_3$). 

\begin{table}
\begin{displaymath}
    \begin{array}{c|cccccccc}
   & [0 0 0] &   [0 0 1] &   [0 1 0] &   [0 1 1] &   [1 0 0] &   [1 0 1] &   [1 1 0] &   [1 1 1] \\
\hline
      & 0.0 &       0.8 &       0.6 &       1.4 &       0.2 &       1.0 &       0.8 &       1.6 \\
      & 0.4 &       0.0 &       1.0 &       0.6 &       0.6 &       0.2 &       1.2 &       0.8 \\
      & 0.2 &       1.0 &       0.0 &       0.8 &       0.4 &       1.2 &       0.2 &       1.0 \\
      & 0.6 &       0.2 &       0.4 &       0.0 &       0.8 &       0.4 &       0.6 &       0.2 \\
      & 0.6 &       1.4 &       1.2 &       2.0 &       0.0 &       0.8 &       0.6 &       1.4 \\
      & 1.0 &       0.6 &       1.6 &       1.2 &       0.4 &       0.0 &       1.0 &       0.6 \\
      & 0.8 &       1.6 &       0.6 &       1.4 &       0.2 &       1.0 &       0.0 &       0.8 \\
      & 1.2 &       0.8 &       1.0 &       0.6 &       0.6 &       0.2 &       0.4 &       0.0 \\
\hline

\end{array}
\end{displaymath}
    \caption{Upper pairwise expectations for the multi-label example.}
    \label{tab:upper_mult_lab}
\end{table}

\section{Discussion and conclusion}\label{sec:conclusion}
In this study, we have studied $k$-budgeted regret-based decision rules that return an optimal subset of size $k$ with respect to some value function. To do so, we recalled minimax criteria which minimizes the maximal gain to the adversary on a given set of alternatives and proposed a new regret-based decision rule called maximin criteria which swaps the order of selecting alternatives by the decision maker and the adversary. We also provided algorithms for both criteria and discussed their properties with respect to maximality. Note that our framework can be extended straightforwardly to continuous spaces as long as one can compute upper expectations over it, while extending it to an infinite set of acts would be trickier.

From the experimental perspective, we compared the minimax and maximin criteria and their greedy approximation on generated sets. Overall, both algorithms perform better than their greedy approximation. We also observed that the maximin criteria can capture all maximal elements faster than the minimax criteria and is more consistent with the maximality decision rule. However, the computational complexity of our proposed algorithm for maximin criteria is much harder than for the minimax criteria, as we showed that the former requires to solve an NP-complete problem, while the latter can be solved in polynomial time. This drawback is however of limited importance for small values of $k$. So, one should clearly prefer the maximin approach as long as it is computationally affordable, and otherwise take the minimax approach.

Note that whilst we are not aware of other works considering the specific problem of delivering at most $k$ alternatives when modelling uncertainty as a credal set, some works provide methods and ideas that could easily be leveraged to do so. A first strand of works in this direction would consist in considering nested models where the imprecision is controlled by some parameter, allowing one to go from a precise probability mass function (and having a unique optimal action) to the full credal set. 
\Citet{jansen2018concepts} consider such a parametric model for imprecisely defined utility functions (that we assume here well-defined), further discussed by \citet{miranda2023inner} in combination with credal set approximation. Those same authors also discuss the notion of centroid for credal sets~\cite{miranda2023centroids}, and it would be easy to go from this to the idea of a nested sequence. A second proposal is to associate an evaluation to each possible maximal and/or E-admissible (a concept we did not consider here) alternatives~\cite{jansen2022quantifying}, to rank-order them according to this evaluation and to take the top-k alternatives. Such approaches also appear legitimate to solve the issue considered in this paper, with a different philosophy. In particular, they would consider alternatives individually, rather than offering an evaluation for a whole set of alternatives. One risk is then that potential interactions between alternatives would be ignored, and that similar alternatives could be selected in the $k$ retained ones (or, in the case of E-admissibility degrees, that none of a collection of similar alternatives would be retained, while it would probably be desirable to retain at least one of those). On the other hand, one advantage over our approach is that such approaches makes it easier to be coherent with existing decision rules, and some of them can be solved efficiently. Finally, we also think that regret formulations are well-known and appealing to many researchers, which is also an argument to develop such methods in addition to others. One of our future endeavour would be to compare those various approaches, both from theoretical and practical perspectives, yet this would require to discuss desirable properties of credal budgeted decision rules, something that is out of the scope of the current paper.

In other future work, we can look at a more practical perspective, for example, we may apply these proposed budgeted rules to actual decision problems such as machine learning with structured outputs or system design, where the decision maker is limited by human cognitive limits or where inspecting more closely the different proposed options can lead to a high monetary cost.

\section*{Acknowledgement}
This project is funded by National Research Council of Thailand (NRCT). This research was supported by Chiang Mai University. NN would like to thank Assoc. Prof. Manad Khomkong for his support. We also thank Tom Davot for discussions about the complexity proof. 

\bibliographystyle{plainnat} 
\bibliography{cas-refs}

\end{document}